\def\Om{{\Omega}}
\def\po{{\partial}}
\def\ve{{\varepsilon}}
\def\al{{\alpha}}
\def\vr{{\varphi}}
\def\re{{\mathbb{R}}}
\def\bn{{\mathbb{N}}}
\newcommand{\lv}{\left\vert}
\newcommand{\rv}{\right\vert}
\def\fre{\mathcal}
\def\Om{{\Omega}}
\def\po{{\partial}}
\def\ve{{\varepsilon}}
\def\al{{\alpha}}
\def\bn{{\mathbb{N}}}
\newcommand{\Fin}{\hfill$\Box$}
\newcommand{\disp}{\displaystyle}
\numberwithin{equation}{section}
\newtheorem{proposition}{Proposition}[section]
\newtheorem{theorem}{Theorem}[section]
\newtheorem{remark}{Remark}[section]
\begin{document}

\title{\vspace{-1.3in}
{\sc On the Approximate Controllability of Stackelberg-Nash Strategies for
Linear Heat Equations in $\mathbb{R}^N$ with Potentials}}
\date{}
\maketitle

\vspace{-40pt}

\centerline{\sc Isa\'ias P. de Jesus$^{}$\,\footnote{E-mail: {\tt isaias@ufpi.br}}}

\centerline{Universidade Federal do Piau\'i,\, DM, PI, Brasil}

\centerline{\sc Silvano B. de Menezes$^{}$\,\footnote{Corresponding author:\,\,E-mail: {\tt silvano@mat.ufc.br}\\
\underline{Keywords and phrases:} Hierarchic control; Nash-Stackelberg strategy;
weighted Sobolev spaces; unbounded domains.}}

\centerline{Universidade Federal do Cear\'a,\, DM, CE, Brasil}

\vspace{20pt}
\begin{abstract} In this paper we establish hierarchic control for the linear
heat equation in $\re^N$ with potentials. Our strategy is inspired by the techniques developed by  J.I. D\'iaz and J.-L. Lions \cite{DL}; however many new difficulties arise  due to lack of compactness of Sobolev embeddings. We manage these adversities by employing similarity variables and weighted Sobolev spaces.

\end{abstract}



\section{Introduction and main result}\label{introd}

Optimization problems appear quite often in a number of problems from engineering sciences and economics. Many mathematical models are formulated in terms of optimization problems involving a unique objective, minimize cost or maximize profit, etc... In
more realistic and involved situations, several (in general conflicting) objectives ought to be considered. In general for the classical mono-objective control problem, a functional adding the objectives of the problem is defined and an unique control is used. When
it is not clear how to average the different objectives or when several controls are used,
it becomes essential to study multi-objective problems.
The different notions of equilibrium  for multi-objective problems were introduced
in economy and games theory (see \cite{N, P, S}).

In this article, motivated by the arguments contained in the work of J.I. D\'iaz and J.-L. Lions \cite{DL}, we investigate similar
question of hierarchic control, employing the Stackelberg-Nash strategy for the semilinear heat equation in an unbounded domain $\disp \Om \subset \mathbb{R}^N$ when the control acts on
the interior of $\Om$.

The general statement of the problem is as follows: Let $\mathcal{O}$, $\mathcal{O}_{1}$,  $\mathcal{O}_{2}$, ..., $\mathcal{O}_{n}$ be a bounded open and
disjoint nonempty subsets of $\Om$. Given a control time $T>0$ and a potentials  $\disp a(x,t)\in L^\infty(\re^N \times (0,T))$ and $\disp b(x,t)\in \left(L^\infty(\re^N \times (0,T))\right)^N$, we consider the following semilinear heat equation:
\begin{equation}\label{1}
 \left| \begin{array}{l}
 u_t - \Delta u +a(x,t)u+b(x,t).\nabla u= f \chi_{\mathcal{O}} +\displaystyle\sum_{i = 1}^{n} w_i \chi_{\mathcal{O}_i}\;\;  \mbox{ in }\; Q=\Om \times (0, T), \\[5pt]\disp
 u=0\;\; \mbox{ on }\; \Sigma=\partial \Om \times (0,T),\\[5pt]\disp
 u(x, 0) =u_0(x)\;\; \mbox{ in }\; \Om,
 \end{array}\right.
\end{equation}
where $u_0,  f$ and $w_i$ are given functions in appropriate spaces, $i=1,2, ..., n$, and $\chi_{\mathcal{O}}$ and $\chi_{\mathcal{O}_i}$ denote the characteristic functions of the sets $\mathcal{O}$ and $\mathcal{O}_i$, respectively. We will assume that $u_0\in L^2(\re^N)$, $ f\in L^2(\re^N \times (0,T))$ and $w_i\in L^2(\re^N \times (0,T))$, $i=1, ..., n$.

The subset $\mathcal{O} \subset\Omega$ is the control domain (which is supposed to be as
small as desired), $\mathcal{O}_{1}$,  $\mathcal{O}_{2}$, ..., $\mathcal{O}_{n}$ the secondary control domains,
the function $f$ is called leader control and $w_i$,  $(i=1,2, ..., n)$, the followers controls.

To avoid repetitions, throughout this paper, the index $i$ will always vary from $1$ to $n$.

\begin{remark}\label{r1} In (\ref{1}) it was assumed $u(x,0) =u_0(x) = 0$ in $\Om$. If $u_0 \not= 0$, the system can be transformed into one equivalent but
with $u_0 = 0$. This is possible in consequence of linearity of the system.
\end{remark}

As the solution $u$ of (\ref{1}) depends on $f, w_1, ..., w_n$ then we denote it by $\disp u=u(x, t, f, \mathbf{w})$, where $\mathbf{w}=(w_1, ..., w_n)$, or sometimes by $\disp u=u(x, t, f, w_1, ..., w_n)$. We will study a control problem for (\ref{1}) in the case of $f$ to be independent and $w_1, . . . ,w_n$ depend on $f$. More explicitly, the control $f$ does a choice for its police and the choices of the polices of $w_1, . . . ,w_n$ depend on that of $f$. For this reason the
control $f$ is called the leader and $w_1, . . . ,w_n$ the followers. This process of control is called by Lions \cite{L1} a hierarchic control.

In order to localize the action of the controls $w_i$, we introduce the functions $\widetilde{\rho}_i(x)$, defined on $\Om$ with real values,
satisfying:
\begin{equation}\label{2}
\left|
\begin{array}{l}
\widetilde{\rho}_i \in L^\infty(\re^N), \;\; \widetilde{\rho}_i \geq 0,\\ \disp
\widetilde{\rho}_i=1 \mbox{ in } \mathcal{G}_i \subset \Om, \mbox{ where $\mathcal{G}_i$ is a region of where $w_i$ works}.
\end{array}
\right.
\end{equation}

The objective is the controls $f, w_1, . . . ,w_n$ work so that the state function $u(x, t)$, solution of the state equation (\ref{1}), reaches, at the time $T$ , an ideal state $u^T(x)$, with cost functionals defined by:
\begin{equation}\label{fsn}
\disp \widetilde{J}_i(f, w_1, \ldots, w_n) =
\displaystyle\frac{1}{2} \displaystyle\int_{0}^{T}
\displaystyle\int_{\mathcal{O}_i} w_i^2 dx dt +
\displaystyle\frac{\alpha_i}{2} \int_{\re^N}\left|  \widetilde{\rho}_i [u(x, T, f, \textbf{w})
- u^T] \right|^2dx,
\end{equation}
with $\textbf{w}=(w_1, ..., w_n)$, $\alpha_i>0$ constant.

We describe the methodology as follows: The followers $w_1, . . . ,w_n$ suppose the leader did a choice $f$ for his police.
Then, they try to find a Nash equilibrium for its costs $\widetilde{J_1}, . . . ,\widetilde{J}_n$ , that is, they look for controls $w_1, . . . ,w_n$ , depending
on $f$, such that they minimize their costs. This means that they look for $w_1, . . . ,w_n$ , depending on $f$, satisfying:
\begin{equation}\label{n1}
\disp \widetilde{J}_i(f, w_1, \ldots, w_i, \ldots, w_n) \leq \widetilde{J}_i(f, w_1, \ldots, \overline{w}_i, \ldots, w_n), \;\;\; \forall\; \overline{w}_i \in L^2(\mathbb{R}^N\times (0,T)).
\end{equation}

The controls $w_1, . . . , w_n$, solutions of the system of $n$ inequalities (\ref{n1}), are called Nash equilibrium for the costs $\widetilde{J}_1, . . . , \widetilde{J}_n$ and they depend on $f$ (cf. Aubin \cite{A}).

\begin{remark}\label{r2} In another way, if the leader makes a choice $f$, then the follower makes also a choice $w_i(f)$ which becomes minimum its cost $\widetilde{J}_i$, that is,
\begin{equation}\label{n1eq}
\disp \widetilde{J}_i(f, w_1, \ldots, w_i, \ldots, w_n) =\disp \inf_{\overline{w}_i\in L^2(\mathbb{R}^N\times (0,T))}\widetilde{J}_i(f, w_1, \ldots, \overline{w}_i, \ldots, w_n).
\end{equation}

This is equivalent to the inequalities (\ref{n1}). This process is called Stackelberg-Nash strategy, see D\'iaz and Lions \cite{DL}.
\end{remark}

The control problem that we will consider is as follows: to find controls $\textbf{w}=(w_1, ..., w_n)$ and the corresponding state $u$ verifying (\ref{1}) and the Nash equilibrium related to the functionals $\widetilde{J}_i$ defined in (\ref{fsn}), subject to the following (approximate) controllability constraint
\begin{equation}\label{3}
\disp u(x, t, f, \mathbf{w}) \in B_{L^2(\re^N)}(u^T, \alpha),
\end{equation}
where $B_{L^2(\re^N)}(u^T, \alpha)$ denote the ball of $L^2(\re^N)$ with center in $u^T$ and ratio $\alpha > 0$ (a given number), i.e. if
\begin{equation}\label{4}
\left|\begin{aligned}
\,& u(x, t, f, \mathbf{w}) \text{ describes a dense subset of the given state space}\\
\,& \text{when $f$ spans the set of all controls available to the leader}.
\end{aligned}
\right.
\end{equation}
\begin{remark}\label{r3}
We emphasize again that in (\ref{4}) the controls $w_i$ are chosen so that (\ref{n1}) is satisfied. Therefore they are functions of $f$.
\end{remark}

To explain this optimal problem, we are going to consider the following two
sub-problems:

\textbf{$\bullet$ Problem 1} Fixed any leader control $f$, to find follower controls $\disp w_1, ..., w_n$ (depending on $f$) and the associated state $u$ solution of (\ref{1}) satisfying the Nash equilibrium related to $\widetilde{J}_i$ defined in (\ref{fsn}).

\textbf{$\bullet$ Problem 2} Assuming that the existence of the Nash equilibrium $w_1(f), . . . ,w_n(f)$ was proved, then when $f$ varies in $L^2(\re^N\times (0,T))$, to prove that the solutions $u(x, t, f, w_1(f), . . . , w_n(f))$ of the state equation (\ref{1}), evaluated at $t = T$, that is, $u(x, T, f, w_1(f), . . . , w_n(f))$, generate a dense subset of $L^2(\re^N)$. This permits to approximate $u^T$.

The problems 1 and 2 were proved by D\'iaz and Lions \cite{DL} when $\Om$ is a bounded set. The aim of this paper is to adapt the techniques introduced in \cite {DL} to unbounded domains by introducing the weighted Sobolev spaces of Escobedo and Kavian \cite{EK} that guarantee the compactness of the Sobolev's embeddings.
Nevertheless, this proof is valid only when $\Om=\mathbb{R}^N$ or a cone-like domain, since it is necessary to make a change of variables. For simplicity we limit ourselves to the case $\Om=\mathbb{R}^N$.
Under these conditions, the semilinear heat equation given in (\ref{1}) reads as follows:
\begin{equation}\label{1.1}
 \left| \begin{array}{l}
 u_t - \Delta u +a(x,t)u+b(x,t).\nabla u= f \chi_{\mathcal{O}} +\displaystyle\sum_{i = 1}^{n} w_i \chi_{\mathcal{O}_i}\;\;  \mbox{ in }\;  \mathbb{R}^N\times (0, T), \\[5pt]\disp
 u(x, 0) =u_0(x)=0\;\; \mbox{ in }\; \mathbb{R}^N.
 \end{array}\right.
\end{equation}

The main result of this paper is the following:
\begin{theorem}\label{mt} Suppose that $f\in L^2(\mathbb{R}^N \times(0,T))$ and there exists a unique Nash equilibrium $w_1, ..., w_n$, depending on $f$, given by the inequalities (\ref{n1}).

Then, the set of solutions $u\left(x, t, f, \mathbf{w}(f)\right)$ of (\ref{1.1}) evaluated at time $t=T$ is dense in $L^2(\re^N)$.

\end{theorem}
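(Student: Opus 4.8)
The plan is to follow the classical duality/density strategy for approximate controllability of linear parabolic equations, adapted to the whole space via similarity variables and the weighted Sobolev spaces of Escobedo--Kavian. First I would make precise the structure of the Nash equilibrium: writing down the Euler--Lagrange (optimality) system for each follower, differentiating $\widetilde{J}_i$ in $w_i$ yields that the Nash equilibrium $\mathbf{w}(f)=(w_1(f),\dots,w_n(f))$ is characterized by $w_i = -\varphi_i\chi_{\mathcal{O}_i}$, where each $\varphi_i$ solves a backward adjoint heat equation with source $\alpha_i\widetilde{\rho}_i^2\,[u(\cdot,T,f,\mathbf{w})-u^T]$ at time $T$. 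Since $u$ depends linearly on $f$ (the state equation is linear, and by Remark~\ref{r1} we may take $u_0=0$), the map $f\mapsto u(\cdot,T,f,\mathbf{w}(f))$ is affine; write $u(\cdot,T,f,\mathbf{w}(f)) = L f + \xi$ for a bounded linear operator $L$ and a fixed element $\xi$ coming from the $u^T$-terms feeding back through the $\varphi_i$. The density assertion then reduces to showing that the range of $L$ is dense in $L^2(\mathbb{R}^N)$, i.e.\ that $L^*g = 0$ implies $g=0$.

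Next I would pass to similarity variables, $u(x,t) = (T-t)^{-N/4}\,v\!\left(\frac{x}{\sqrt{T-t}},\, -\log(T-t)\right)$ or the analogous forward Kavian-type substitution, which turns the heat operator on $\mathbb{R}^N\times(0,T)$ into an operator of the form $v_s - \Delta v - \tfrac12 y\cdot\nabla v - \tfrac N4 v$ (plus transformed, still $L^\infty$, potentials $a,b$) on $\mathbb{R}^N\times(0,S)$, which is self-adjoint and has compact resolvent in the weighted space $L^2(K\,dy)$ with Gaussian-type weight $K(y)=e^{|y|^2/4}$; the associated weighted Sobolev space $H^1_K$ embeds compactly in $L^2_K$. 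This is exactly the tool that replaces the (unavailable) Rellich compactness on $\mathbb{R}^N$, and it is what lets the coupled optimality system for $(u,\varphi_1,\dots,\varphi_n)$ be solved (fixed-point/Lax--Milgram in the weighted energy space) and lets the adjoint states be handled. In these variables the dual problem becomes: if $\psi$ solves the transformed adjoint (backward) equation coupled with the same $\widetilde{\rho}_i$-weighted terms, and if $\psi\equiv 0$ on $\mathcal{O}\times(0,T)$ (the leader's control set), then one must conclude $\psi(\cdot,0)=0$, hence $g=0$.

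The heart of the matter, and the step I expect to be the main obstacle, is this unique-continuation/density argument for the \emph{coupled} adjoint system, not a single scalar equation: the adjoint of $f\mapsto u(\cdot,T)$ threads through the Nash feedback, so $L^*$ is built from the adjoint state together with the $n$ follower-adjoints $\varphi_i$, all coupled through the terms supported in $\mathcal{G}_i$ and $\mathcal{O}_i$. Concretely I would argue: choose $g\in L^2(\mathbb{R}^N)$ with $\langle L f + \xi - u^T,\,g\rangle$ independent of $f$ for all $f$, solve the coupled backward system with data determined by $g$, observe that orthogonality to the range forces the corresponding adjoint leader-state to vanish on $\mathcal{O}\times(0,T)$, then invoke Holmgren-type unique continuation for the (transformed) parabolic operator — valid because the transformed potentials remain bounded — to propagate the vanishing to all of $\mathbb{R}^N\times(0,T)$, and finally read off $g=0$. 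The two technical points requiring care are (i) justifying that the similarity change of variables maps the relevant $L^2$ and energy spaces isomorphically onto the weighted spaces $L^2_K$, $H^1_K$ so that the Escobedo--Kavian compact embedding applies and the coupled system is well posed, and (ii) making sure the presence of the first-order term $b\cdot\nabla u$ and the potential $a$ does not destroy the unique-continuation property — which it does not, since both transform to bounded coefficients. Granting these, density of the reachable set at time $T$ follows, and with it the Hahn--Banach conclusion that $\{u(\cdot,T,f,\mathbf{w}(f)) : f\in L^2(\mathbb{R}^N\times(0,T))\}$ is dense in $L^2(\mathbb{R}^N)$, proving the theorem. \Fin
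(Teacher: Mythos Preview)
Your overall strategy matches the paper's: derive the optimality system for the Nash equilibrium, pass to similarity variables and the Escobedo--Kavian weighted spaces, reduce density to a unique-continuation statement for the coupled adjoint system, and conclude by Hahn--Banach. Two points need correction, one minor and one a genuine gap.

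First, the minor point: ``Holmgren-type'' unique continuation is not what is used, nor would it apply, since the potentials $a,b$ (and hence their transforms $A,B$) are merely $L^\infty$, not analytic. The paper invokes Fabre's Carleman-estimate result for parabolic operators with bounded coefficients; your parenthetical ``valid because the transformed potentials remain bounded'' shows you have the right hypothesis in mind, but you should name the correct tool.

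Second, and more substantively: your claim that the similarity change of variables ``maps the relevant $L^2$ and energy spaces isomorphically onto the weighted spaces $L^2_K$'' is false, and this is precisely where your argument has a gap. Under the forward substitution $y=x/\sqrt{1+t}$, $v(y,S)=(1+T)^{N/2}u(x,T)$, one computes that $v(\cdot,S)\in L^2(K)$ if and only if $u(\cdot,T)$ lies in the Gaussian-weighted space $L^2\bigl(e^{|x|^2/4(1+T)}\,dx\bigr)$, which is a strict (though dense) subspace of $L^2(\re^N)$. Consequently the duality/unique-continuation argument you sketch yields density of the reachable set only in $L^2(K)$, not in $L^2(\re^N)$. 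The paper closes this gap with an explicit two-step argument: it first proves the analogue of the theorem in $L^2(K)$ (this is its Theorem~3.1, whose proof is essentially what you outlined), and then, for an arbitrary target $u^T\in L^2(\re^N)$, approximates $u^T$ by a sequence $u_n^T\in L^2(K)$ and applies the $L^2(K)$ result to each $u_n^T$, combining the resulting $\varepsilon/2$ estimates. Without this additional density step your proof would be incomplete.
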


In order, to avoid the problems related to the noncompactness of the Sobolev's embeddings
in $\mathbb{R}^n$, we consider the operator
\begin{eqnarray*}
& L\xi = - \Delta \xi - y \displaystyle\frac{\nabla \xi}{2}=-\frac{1}{K}\;\mathrm{div}(K \nabla \xi),\\[5pt]
& \disp K(y) =  \mathrm{exp} \left( \displaystyle\frac{|y|^2}{4} \right),
 \\[5pt]
& \disp D(L) \subset L^2(K) =  \left\{ \xi: \mathbb{R}^N \rightarrow
\mathbb{R} : \displaystyle\int_{\mathbb{R}^N} K(y) |\xi(y)|^2 dy <
\infty \right\},
\end{eqnarray*}
and the evolution equation
\begin{equation}\label{1v}
\left| \begin{array}{l}
 \disp v_s + Lv+A(y,s)v+B(y,s).\nabla v - \frac{N}{2}\;v = g(y,s) \chi_{\mathcal{O}'} + \sum_{i = 1}^{n} h_i(y,s) \chi_{\mathcal{O}'_i} \;\mbox{ in }\; \mathbb{R}^N \times (0,S), \\
 v(y, 0) = v_0(y)  \mbox{ in } \mathbb{R}^N,
 \end{array}\right.
\end{equation}
with $v_0(y) \in L^2(K)$, $\mathcal{O}'$,  $\mathcal{O}'_{i}$ a suitable set in $\mathbb{R}^N$; and $S = \log(T + 1)$. A change
of variables transforms (\ref{1.1}) in (\ref{1v}). In fact, if we define for $s \geq 0$, $y\in \mathbb{R}^N$
\begin{equation}\label{mv1}
\left|
\begin{array}{l}
\disp v(y,s) = e^{\frac{sN}{2}}\, u(e^{\frac{s}{2}}y, e^s - 1)\\[9pt]\disp
\disp A(y,s)=e^{s}\, a(e^{\frac{s}{2}}y, e^s - 1)\\[9pt]\disp
B(y,s)=e^{\frac{s}{2}}\, b(e^{\frac{s}{2}}y, e^s - 1)\\[9pt]\disp
g(y,s) = e^{\frac{s(N+2)}{2}}\,f\big(e^{\frac{s}{2}}y, e^s-1\big)\\[9pt] \disp
h_i(s, y)= e^{\frac{s(N+2)}{2}}\,w_i\big(e^{\frac{s}{2}}y, e^s-1\big),
\end{array}\right.
\end{equation}
then, for $u_0\in L^2(K)$ and $u$ solution of (\ref{1.1}), $v$ verifies (\ref{1v}) with $v_0 = u_0$, $S =
\log(T + 1)$, $\mathcal{O}'(s)=e^{-\frac{s}{2}}\mathcal{O}$, and $\mathcal{O}_i'(s)=e^{-\frac{s}{2}}\mathcal{O}_i$.

Reciprocally, if we know a solution $v$ of (\ref{1v}), and we define for $t \geq 0$, $x \in \mathbb{R}^N$
\begin{equation}\label{mv2}
\left|
\begin{array}{l}
\disp u(x,t) = \left(1+t\right)^{-\frac{N}{2}}\, v(\frac{x}{\sqrt{1+t}}, \log(1+t))\\[9pt]\disp
a(x,t) =\left(1+t\right)^{-1}\, A(\frac{x}{\sqrt{1+t}}, \log(1+t))\\[9pt]\disp
b(x,t) =\left(1+t\right)^{-\frac{1}{2}}\, B(\frac{x}{\sqrt{1+t}}, \log(1+t))\\[9pt]\disp
f(x,t) = \left(1+t\right)^{-\frac{N}{2}-1}\, g(\frac{x}{\sqrt{1+t}}, \log(1+t))\\[9pt] \disp
w_i(x,t) = \left(1+t\right)^{-\frac{N}{2}-1}\, h_i(\frac{x}{\sqrt{1+t}}, \log(1+t)),
\end{array}\right.
\end{equation}
it is not difficult to see that $u$ satisfies (\ref{1.1}) with $u_0 = v_0$ and $T = e^S - 1$.

The change of variables is interesting because the operator L defined above has
compact inverse in $L^2(K)$ and then the equation (\ref{1v}) can be studied in the same
manner as the heat equation in a bounded region $\Om$ of $\mathbb{R}^N$.

The rest of the paper is organized as follows: in Section \ref{sec2}, we introduce the similarity variables and the weighted Sobolev spaces; we discuss some a priori estimates of the norm of the solution in these spaces. In Section \ref{sec3} we investigate the approximate controllability proving the density Theorem \ref{mt}. Section \ref{sec4} is concerned with the existence of the Nash equilibrium which is gotten by using the Lax-Milgram Theorem. Finally, for completeness, in Section \ref{sec5} we recall a result concerning the existence and uniqueness of a solution to~\eqref{1} and also similar results for this adjoint system.

\section{Weighted Sobolev spaces}\label{sec2}

In this section we recall some basic facts about the similarity
variables and weighted Sobolev spaces for the heat equation. We
refer to \cite{EK} for further developments and details.

We consider the solution $u=u(x,t)$ of problem (\ref{1.1}). We now
introduce the new space-time variables
\begin{equation}
y = \frac{x}{\sqrt{t+1}}\,; \qquad s = \log(t+1). \label{2.1}
\end{equation}

Then, given $u=u(x,t)$ solution of (\ref{1.1}) we introduce
\begin{equation}
v(y,s) = e^{sN/2}\, u(e^{s/2}y, e^s - 1). \label{2.2}
\end{equation}

It follows that $u$ solves (\ref{1.1}) if and only if $v$ satisfies (\ref{1v}).
In the sequel, we shall analyze the approximate controllability of
the system (\ref{1v}) similarly to Theorem \ref{mt}.

The elliptic operator involved in (\ref{1v}) may also be written as
\begin{equation}
Lv = -\Delta v - \frac{y\cdot\nabla v}{2} = -\frac{1}{K(y)}\,
\text{div}(K(y)\nabla v) \label{2.4}
\end{equation}
where $K = K(y)$ is the Gaussian weight
\begin{equation}
K(y) = \exp\big(|y|^2/4\big). \label{2.5}
\end{equation}

We introduce the weighted $L^p$-spaces:
$$ L^p(K) = \left\{f\colon
|f|_{L^p(K)} = \left[\int_{\re^N} |f(y)|^p\, K(y)dy\right]^{1/p}< \infty\right\}. $$

For $p=2$, \, $L^2(K)$ is a Hilbert space and
the norm $|\,\cdot\,|_{L^2(K)}$ is induced by the inner product
$$ (f,g) = \int_{\re^N} f(y)\;g(y)\;K(y)\,dy. $$

We then define the unbounded operator $L$ on $L^2(K)$ by setting
$$ Lv = -\Delta v -\frac{y\cdot\nabla v}{2}\,, $$
as above, and $D(L) = \{v \in L^2(K) : Lv \in L^2(K)\}$.

Integrating by parts it is easy to see that $$
\int_{\re^N}(Lv)vK(y)\,dy = \int_{\re^N} |\nabla v|^2\,K(y)\,dy.
$$ Therefore it is natural to introduce the weighted $H^1$-space:
$$ H^1(K) = \left\{f \in L^2(K) : \frac{\po f}{\po y_i} \in
L^2(K), \, i=1,2,\dots,N\right\} $$
endowed with the norm
$$
|f|_{H^1(K)} = \left[\int_{\re^N} \big(|f|^2 + |\nabla f|^2\big)K(y)\,dy\right]^{1/2}. $$

In a similar way, for any $s \in \bn$ we may introduce the space
$$ H^s(K) = \{f \in L^2(K) : D^\al f \in L^2(K), \forall\,\al,
|\al| \le s \}. $$

The operator $L$ defined above has compact
inverse in $L^2(K)$ and the equation (\ref{1v}) can be studied in the
same manner as the heat equation in a bounded region $\Om$ of $\re^N$.

In fact, the following properties were proved in \cite{EK}:
\begin{equation}\label{desigualdades}
\left|
\begin{array}{l}
\disp \int_{\re^N}f^2|y|^2\,K(y)\,dy \le c \int_{\re^N} |\nabla f|^2\,K(y)\,dy,
\,\,\, \forall\, f \in H^1(K), \\[9pt] \disp
\text{the embedding } H^1(K) \hookrightarrow L^2(K) \text{ is compact,} \\[9pt] \disp
 \forall f \in H^1(K), \;\frac N2 \int_{\re^N} K(y)|f|^2dy \leq \int_{\re^N} K(y)|\nabla f|^2 dy,\\[9pt]\disp
 v\in H^1(K) \Leftrightarrow K^{\frac 12} v \in H^1(\re^N), \\[9pt]\disp
 \vr_1 = \exp\bigg(\dfrac{-|y|^2}{4}\bigg) \; \mbox{ is a eigenfunction of } L \mbox{ corresponding to } \; \lambda_1 = N/2,\\[9pt]\disp
 \mbox{ the minimum eigenvalue of } L, \mbox{ i.e. } \; L\vr_1 = \dfrac N2\,\vr_1\\[9pt]\disp
L\colon H^1(K) \to (H^1(K))^{-1}\text{ is an isomorphism,}\\[9pt]\disp
D(L) = H^2(K) \qquad\text{and} \\[9pt]\disp
L^{-1}\colon L^2(K) \to L^2(K) \text{ is self-adjoint and
compact.}\\[9pt]\disp
 \mbox{if } N=1,\,\, v \in H^1(K), \mbox{ then } K^{\frac 12}\,v \in L^\infty(\mathbb{R}),\\[9pt]\disp
 \mbox{if } N=2,\,\, H^1(K)\subset L^q(K)\;\;\forall q\geq 2, \mbox{ and } q\leq \infty,\\[9pt]\disp
 \mbox{if } N=3,\,\, H^1(K)\subset L^{2^*}(K)\;\mbox{ with } 2^*=\frac{2N}{N-2}.
\end{array}
\right.
\end{equation}

\begin{remark}\label{rimer} Moreover, we have $L^2(K) \subset L^1(\re^N)$ with continuous embedding. If $v \in L^2(K)$, then
$$
\int_{\re^N}|v|\;dy = \int_{\re^N} \frac{1}{K^{1/2}}\, K^{1/2}\,|v|\;dy \le \bigg(\int_{\re^N} K|v|^2dy\bigg)^{\frac 12}\,\bigg(\int_{\re^N} \frac 1K\;dy\bigg)^{\frac 12} < \infty.
$$
\end{remark}

Given two separable Hilbert spaces $V$ and $H$ such that $V
\subset H$ with continuous embedding, $V$ being dense in $H$, let
us consider the Hilbert space $$ W(0,T,V,H) = \{\xi \in L^2(0,T,V) :
\xi_t \in L^2(0,T,H)\} $$ equipped with the norm
$$|\xi|_{W(0,T,V,H)}^2 = |\xi|_{L^2(0,T,V)}^2 +|\xi_t|_{L^2(0,T,H)}^2 $$
see, for instance \cite{9}.

Let us recall some important and useful facts about the spaces appearing
in this paper:
\begin{equation}\label{4imersoes}
\left|
\begin{array}{l}
\disp W(0,T,H^1(K), H^{-1}(K)) \subset L^2(0,T,L^2(K)) \;\; \mbox{ with compact imbedding;}\\[3pt]\disp
\disp  W(0,T,H^1(K), H^{-1}(K)) \subset C([0,T], L^2(K)) \;\; \mbox{ with continuous imbedding;}\\[3pt]\disp
\disp W(0,T,H^2(K), L^2(K)) \subset L^2(0,T,H^1(K)) \;\; \mbox{ with compact imbedding;}\\[3pt]\disp
\disp W(0,T,H^2(K), L^2(K)) \subset C([0,T], H^1(K)) \;\; \mbox{  with continuous imbedding.}
\end{array}
\right.
\end{equation}

The results of (\ref{desigualdades}) to (\ref{4imersoes}) allow us to prove that if  $g\in L^2(0,S;L^2(K))$, $h_i\in L^2(0,S;L^2(K))$ and  $v_0 \in L^2(K)$, then system (\ref{1v}) admits a unique solution $v\in W(0,S,H^1(K), H^{-1}(K))$ (see Theorem \ref{eul}).

We will consider the following objective functionals $J_1, . . ., J_n$ defined by
 \begin{equation}\label{fp}
 \left|
 \begin{array}{l}
 \disp J_i: L^2(0,S;L^2(K)) \rightarrow \mathbb{R}\\[5pt]\disp
 J_i(g, h_1, \ldots, h_i, \ldots, h_n)\\[5pt]\disp
  = \frac{1}{2} \int_{0}^{S}
\displaystyle\int_{\mathcal{O}'_i}|D_{y,s}| \;h_i^2\;K(y)\; dy ds +
\displaystyle\frac{\alpha_i}{2} \int_{\re^N} |D_{y}|\;\rho_i(y)^2\left|v(y, S, g,\textbf{h} ) -v^{S}(y)\right|^2 K(y)\;dy,
\end{array}
\right.
\end{equation}
where $v^S(y) = \left(1+T\right)^{\frac{N}{2}} u^T(x)$, with $ \rho_i(y)\geq 0$, $ \rho_i(y)=1$ in $\fre{O}_i'$, $\textbf{h}=(h_1, ..., h_n)$, and $D_{y,s}$, $D_y$ denote the determinant of transformation $\disp (x,t)\to (y,s)$ and $\disp x\to y$, respectively. We note that there exist $k_1$, $k_2$, $k_3$, $k_4$ positive real constants such that
\begin{equation}\label{estd}
\left|
\begin{array}{l}
\disp 0 < k_1 < |D_{y,s}| < k_2\;\; \mbox{ for all } \;\;(y,s),\\[3pt]\disp
0 < k_3 < |D_{y}| < k_4\;\; \mbox{ for all }\; y.
\end{array}
\right.
\end{equation}

The Nash equilibrium for $J_i$ are $h_1, . . . , h_n$, which depend on $g$, solution of
\begin{equation}\label{n1p}
\disp J_i(g, h_1, \ldots, h_i, \ldots, h_n) \leq J_i(g, h_1, \ldots, \overline{h}_i, \ldots, h_n), \;\;\; \forall\; \overline{h}_i \in L^2(0,S;L^2(K)).
\end{equation}

From the regularity and uniqueness of the solution $v$ of (\ref{1v}) the cost functionals $J_1, . . . , J_n$ are well defined. Henceforth, as $J_i$ is convex and lower semi-continuous, then $h_1, . . . , h_n$ is a Nash equilibrium if, and only if, it verifies the Euler-Lagrange equation.

From the definition of $J_i$, we obtain
 \begin{equation}\label{2.7}
 \begin{array}{l}
 \disp J_i\left(g, h_1, \ldots, h_i +
\lambda \widehat{h}_i, \ldots, h_n\right)=\frac{1}{2} \int_{0}^{S}
\displaystyle\int_{\mathcal{O}'_i}|D_{y,s}| \left(h_i+\lambda \widehat{h}_i\right)^2K(y)\; dy ds\\[9pt]\disp
 +
\displaystyle\frac{\alpha_i}{2} \int_{\re^N} |D_{y}|\;\rho_i(y)^2\left|v(y,S, g, h_1, ..., h_i+\lambda \widehat{h}_i, ..., h_n) -v^{S}(y)\right|^2 K(y)\;dy
\end{array}
\end{equation}

\begin{remark}\label{re} As the state equation (\ref{1v}) is linear, for any choice of the controls $ g, h_i$, its unique solution at the time $s$ can
be written as
\begin{equation}\label{2.8}
 \begin{array}{l}
 \disp v(s)=Q_0(s) g+\sum_{i=1}^n Q_i(s) h_i, \;\;0\leq s \leq S,
\end{array}
\end{equation}
where $Q_i$ are linear continuous operators depending on the controls. At the time $s = S$ one has
\begin{equation}\label{2.9}
 \begin{array}{l}
 \disp v(S)=L_0 g+\sum_{i=1}^n L_i h_i,
\end{array}
\end{equation}
where $L_i$ are also linear and continuous operators.
\end{remark}

Then, from (\ref{2.7}) and Remark \ref{re}, we have
\begin{equation}\label{2.10}
 \begin{array}{l}
 \disp J_i\left(g, h_1, \ldots, h_i +
\lambda \widehat{h}_i, \ldots, h_n\right)=\frac{1}{2} \int_{0}^{S}
\displaystyle\int_{\mathcal{O}'_i}|D_{y,s}| \left(h_i+\lambda \widehat{h}_i\right)^2K(y)\; dy ds\\[9pt]\disp
 +\displaystyle\frac{\alpha_i}{2} \int_{\re^N} |D_{y}|\;\rho_i(y)^2\left|L_0g+L_1h_1+ ...+L_i( h_i+\lambda \widehat{h}_i)+ ...+L_nh_n) -v^{S}(y)\right|^2 K(y)\;dy.
\end{array}
\end{equation}

The derivative with respect to $\lambda$, evaluated at $\lambda = 0$, is the Gateaux derivative given by

\begin{equation}\label{2.11}
\begin{array}{l}
\displaystyle\frac{d}{d \lambda}\; J_i\left(g, h_1, \ldots, h_i +
\lambda \widehat{h}_i, \ldots, h_n\right)\Bigg|_{\lambda = 0}=\displaystyle\frac 12 \int_{0}^{S} \displaystyle\int_{\mathcal{O}_i'} |D_{y,s}|
  h_i \widehat{h}_i K(y)\; dy ds \\[9pt]\disp
 + \frac {\alpha_i}{2} \displaystyle\int_{\mathbb{R}^N} |D_{y}|\;\rho_i(y)^2 [v(y, S, g,\textbf{h} ) -v^{S}(y)]\; L_i\widehat{h}_i\;K(y)
 dy,
 \end{array}
 \end{equation}
 for all $\widehat{h}_i$ in $L^2(0,S;L^2(K))$, where $\mathbf{h}=(h_1, ..., h_n)$ and $L_i\widehat{h}_i=\widehat{v}_i(y,S;\widehat{h}_i)$ (cf. Remark \ref{re}), with $\widehat{v}_i(y,s;\widehat{h}_i)$ solution of
 \begin{equation}\label{2.12}
 \left| \begin{array}{l}
\displaystyle\frac{\partial \widehat{v}_i}{\partial s} + L
\widehat{v}_i +A(y,s)\widehat{v}_i+B(y,s)\;.\;\nabla \widehat{v}_i- \displaystyle\frac{N}{2}\widehat{v}_i =
\widehat{h}_i \chi_{\mathcal{O}_i'}\;\; \mbox{ in } \;\;
\mathbb{R}^N \times (0,S), \\[9pt]\disp
\widehat{v}_i(0) = 0 \;\;\mbox{ in }\;\; \mathbb{R}^N.
 \end{array} \right.
 \end{equation}

Therefore, $h_1, . . ., h_n$ is a Nash equilibrium for the cost functionals $J_1, . . ., J_n$ if, and only if, it verifies the Euler-Lagrange equation
\begin{equation}\label{2.13}
J'_i(g, h_1, \ldots, h_i, \ldots, h_n).\widehat{h}_i=0, \;\;\mbox{ for all } \widehat{h}_i \in L^2(0,S;L^2(K)).
\end{equation}

Explicitly we have
\begin{equation}\label{2.14}
\begin{array}{l}
\displaystyle \int_{0}^{S} \displaystyle\int_{\mathcal{O}_i'} |D_{y,s}|\;
  h_i \widehat{h}_i K(y)\; dy ds
 + \alpha_i\displaystyle\int_{\mathbb{R}^N} |D_{y}|\;\rho_i^2(y) [v(y, S, g,\textbf{h} ) -v^{S}(y)]\; \widehat{v}_i\;K(y)
 dy=0,
 \end{array}
 \end{equation}
for all $\widehat{h}_i \in L^2(0,S;L^2(K))$ and $\widehat{v}_i(y,s;\widehat{h}_i)$ solution of (\ref{2.12}).

Note that if $h_1, . . ., h_n$ is the unique Nash equilibrium, depending on $g$, for (\ref{fp}), then its transformation, by the (\ref{mv2}), is
the unique Nash equilibrium $w_1, . . ., w_n$ for (\ref{fsn}), which depends on $f$.

We recall that our initial problem was the controls $f$, $w_1$, . . ., $w_n$ work such that the function $u(x, t)$, unique solution
of (\ref{1.1}), reaches a fixed state $u^T (x) \in L^2(\re^N)$, at time $T$, with cost functionals defined by (\ref{fsn}).

From the change of variables (\ref{mv1}), this problem in $\re^N\times (0,T)$ was transformed into one equivalent (\ref{1v}) in some weighted Sobolev space. Thus, for fixed $v^S(y) \in L^2(K)$ the controls $g, h_1, . . ., h_n$ must work such that the unique solution $v(y, s, g, \mathbf{h}(g))$ of (\ref{1v}), evaluated at $s = S$, reaches the ideal state $v^S(y)$. This will be done in the sense of an approximate controllability. In fact, it is sufficient to prove if $h_1, . . . , h_n$, depending of $g$, is the unique Nash equilibrium for the cost functionals (\ref{fp}), then we have an approximate controllability.
This means that if there exists the Nash equilibrium and $v(y, s; g, \mathbf{h}(g))$ is the unique solution of (\ref{1v}), then the set
generated by $v(y, S; g, \mathbf{h}(g))$ is dense in $L^2(K)$, that is, approximate $v^S(y)$. This will be proven in the next section.

\section{Approximate controllability}\label{sec3}

This section is devoted to proving the main result, namely, Theorem \ref{mt}. For this, initially we obtain the following result on the "approximate controllability" for the system (\ref{1v}):
\begin{theorem}\label{mtp}
Suppose $g\in L^2(0,S;L^2(K))$ and there exists a unique Nash equilibrium $h_1(g), ..., h_n(g)$, depending on $g$, given by the inequalities (\ref{n1p}). Then, the set of solutions $v(y,s;g,\mathbf{h}(g))$ of (\ref{1v}) evaluated at time $s=S$ in dense in $L^2(K)$.
\end{theorem}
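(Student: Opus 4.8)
The plan is to follow the classical Fabre--Puel--Zuazua / Díaz--Lions duality argument, adapted to the weighted space $L^2(K)$. I want to show that the set $\mathcal{R} = \{v(y,S;g,\mathbf{h}(g)) : g \in L^2(0,S;L^2(K))\}$ is dense in $L^2(K)$. By the Hahn--Banach theorem it suffices to take $f^S \in L^2(K)$ orthogonal (in the $L^2(K)$ inner product) to every element of $\mathcal{R}$ minus a fixed reference point, and deduce $f^S = 0$. Concretely, I will pick the reference solution $v^0 = v(y,S;0,\mathbf{h}(0))$ corresponding to leader $g=0$, and suppose $(v(y,S;g,\mathbf{h}(g)) - v^0, f^S)_{L^2(K)} = 0$ for all $g$. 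The goal is to prove $f^S=0$, which gives density.

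The key step is to linearize the map $g \mapsto v(y,S;g,\mathbf{h}(g))$. Because the state equation \eqref{1v} is linear and, by hypothesis, the Nash equilibrium $\mathbf{h}(g)$ is unique and (being the solution of the linear Euler--Lagrange system \eqref{2.14}) depends \emph{affinely} on $g$, the composed map $g \mapsto v(y,S;g,\mathbf{h}(g))$ is itself affine; write it as $L_0 g + \sum_i L_i h_i(g) + (\text{const})$ in the notation of Remark \ref{re}. Then I would introduce the adjoint state $\varphi = \varphi(y,s)$ solving the backward problem
\begin{equation}\label{adj-plan}
\left|\begin{array}{l}
\disp -\varphi_s + L\varphi + A(y,s)\varphi - \mathrm{div}(B(y,s)\varphi) - \frac{N}{2}\varphi = 0 \;\; \mbox{ in } \mathbb{R}^N\times(0,S),\\[6pt]\disp
\varphi(y,S) = f^S(y) \;\; \mbox{ in } \mathbb{R}^N,
\end{array}\right.
\end{equation}
together with the adjoint states $\psi_i$ associated with the followers (source terms involving $\alpha_i\rho_i^2(\varphi$-trace$)$ at time $S$), exactly as in \cite{DL}. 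Multiplying \eqref{1v} by $\varphi$, integrating over $\mathbb{R}^N\times(0,S)$ with the weight $K(y)$, and using the Euler--Lagrange relations \eqref{2.14} to eliminate the follower contributions, the orthogonality condition collapses to
\[
\int_0^S \int_{\mathcal{O}'} |D_{y,s}|\, g\, \varphi\, K(y)\, dy\, ds = 0 \qquad \mbox{for all } g \in L^2(0,S;L^2(K)),
\]
whence $\varphi = 0$ on $\mathcal{O}'\times(0,S)$.

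From $\varphi \equiv 0$ on $\mathcal{O}'(s)\times(0,S)$ I then invoke a unique continuation property for the (weighted) parabolic operator in \eqref{adj-plan}: since the coefficients $A$, $B$ are bounded and the equivalence $v\in H^1(K)\Leftrightarrow K^{1/2}v\in H^1(\mathbb{R}^N)$ in \eqref{desigualdades} turns \eqref{adj-plan} into a standard uniformly parabolic equation for $K^{1/2}\varphi$ with bounded coefficients on $\mathbb{R}^N$, the Fabre--Puel or Saut--Scheurer unique continuation theorem forces $\varphi \equiv 0$ in $\mathbb{R}^N\times(0,S)$, and in particular $f^S = \varphi(\cdot,S) = 0$. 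This establishes density and hence Theorem \ref{mtp} (from which Theorem \ref{mt} follows by undoing the change of variables \eqref{mv2}). The main obstacle I anticipate is twofold: first, justifying rigorously that the Nash map $g\mapsto \mathbf{h}(g)$ is affine and continuous so that the duality computation is legitimate (this rests on the uniqueness hypothesis and the Lax--Milgram structure of \S\ref{sec4}); second, ensuring the unique continuation step genuinely applies in the unbounded setting --- the Gaussian conjugation introduces the extra zeroth-order term $\tfrac{N}{2}$ and a potential-like term from $|y|^2$, but these are harmless for Carleman-based unique continuation, so the compactness machinery of \eqref{desigualdades}--\eqref{4imersoes} is what makes the argument go through where the naive $\mathbb{R}^N$ approach fails.
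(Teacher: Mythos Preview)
Your overall strategy---orthogonality via Hahn--Banach, an adjoint system, and unique continuation---is exactly the paper's, but the adjoint system you sketch is not the right one, and with your version the follower terms do \emph{not} cancel.

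Two concrete problems. First, your backward equation for $\varphi$ omits the weight-induced term $-\tfrac12\,y\cdot B\varphi$: because integrations by parts are against $K(y)\,dy$ and $\nabla K=\tfrac{y}{2}K$, the formal adjoint of $B\cdot\nabla$ in $L^2(K)$ is $-\mathrm{div}(B\cdot)-\tfrac12\,y\cdot B$, not just $-\mathrm{div}(B\cdot)$. Second, and more seriously, you set $\varphi(\cdot,S)=f^S$ and describe the $\psi_i$ as carrying ``source terms involving $\alpha_i\rho_i^2(\varphi\text{-trace})$ at time $S$''. In the actual Díaz--Lions cancellation the roles are reversed: each $\psi_i$ solves the \emph{forward} problem
\[
\psi_i'+L\psi_i+A\psi_i+B\cdot\nabla\psi_i-\tfrac{N}{2}\psi_i=-\tfrac{\alpha_i}{|D_{y,s}|}\,\varphi\,\chi_{\mathcal O_i'},\qquad \psi_i(0)=0,
\]
with an interior source proportional to $\varphi$, while the \emph{coupling at time $S$} lives in the terminal condition for $\varphi$:
\[
\varphi(S)=f^S+\sum_{i=1}^n |D_y|\,\rho_i^2\,\psi_i(S).
\]
It is precisely this terminal coupling that, when you pair $\varphi$ with the optimality-system state $v$ and each $\psi_i$ with the follower adjoint $p_i$ (recall $h_i=-\alpha_i p_i/|D_{y,s}|$ and $p_i(S)=|D_y|\rho_i^2 v(S)$ once $v^S=0$), makes the follower contributions $\sum_i\int_{\mathcal O_i'}\varphi h_i K$ cancel and leaves only $-(f^S,v(S))_{L^2(K)}+\int_{\mathcal O'}\varphi\,g\,K=0$. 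With your terminal condition $\varphi(S)=f^S$ the cross terms survive, and invoking the Euler--Lagrange identity \eqref{2.14} by itself does not remove them. Consequently your final line ``$f^S=\varphi(\cdot,S)=0$'' is also off: once $\varphi\equiv 0$ (by Fabre's unique continuation, Proposition~\ref{prop1}), you still need $\psi_i\equiv 0$ (which follows since their source vanishes) before the coupled terminal condition yields $f^S=0$.
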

\begin{proof}
It will be done in two steps. First, we find the adjoint state and the optimality system. In the second one we prove
the approximate controllability by means of a simple argument of functional analysis and results of unique continuation.

The application of this method requires only the unique
continuation property due to C. Fabre \cite{Fa} (Theorem 1.4 of \cite{Fa}).
More precisely, we have:

\vglue .2in

\begin{proposition}\label{prop1} Let $w$ be an open and nonempty
set of $\re^N$, \, $\hat\omega(s) = e^{-s/2}w$ and $\hat q=
\{(y,s); s \in (0,S), y \in \hat\omega(s)\}$. Assume that $A(y,s)
\in L^\infty(\re^N\times(0,S))$, \,\, $B(y,s) \in
(L^\infty(\re^N\times(0,S)))^N$. Let $p \in L^2(0,S;H^1(K))$ be
such that
\begin{equation}\nonumber
\left\vert
\begin{aligned}
\,&-p_s + Lp + Ap - \mbox{div}(Bp) - \frac N2\,p - \frac 12\,
y\cdot Bp=0 \text{ in } \re^N\times(0,S)\\ \,&p=0
\quad\text{in}\quad \hat q.
\end{aligned} \right.
\end{equation}
Then $p \equiv 0$.
\end{proposition}

\vglue .1in

\noindent{\bf Proof of Proposition \ref{prop1} } $L$, $A$ and $B$ do satisfy the conditions
of Theorem 1.4 due C. Fabre \cite{Fa} and the assumption on $p$ implies
$p \in L_{\text{loc}}^2(0,S,H_{\text{loc}}^1(\re^N))$.
Consequently $p \equiv 0$. \qed

\vglue .1in

$\mathbf{Step\; 1.}$ Suppose there exists the Nash equilibrium $h_1, . . ., h_n$ depending on $g$ for the cost functionals $J_1, . . ., J_n$ defined
in (\ref{fp}). Thus, it implies this is a solution of the Euler-Lagrange equation (\ref{2.14}), conditioned to the system of linear parabolic
equations (\ref{2.12}).

In order to express (\ref{2.14}) in a convenient form, we introduce the adjoint state $p_i$ defined by
\begin{equation}\label{sa}
\left|
\begin{array}{l}
\disp - p_i' + L p_i+Ap_i-\mbox{div }(Bp_i) -\frac{N}{2}\; p_ i-\frac 12\; y\;.\;Bp_i = 0 \;\; \mbox{
in } \;\; \mathbb{R}^N \times (0,S), \\[5pt]\disp
 p_i(y,S) = |D_y|\;\rho_i^2(y)\left[ v(y, S; g, \mathbf{h}) -
v^{S}(y) \right] \;\; \mbox{ in }\;\; \mathbb{R}^N.
\end{array} \right.
\end{equation}

At the end of this paper, we recall without proof a (relatively well known) result concerning the adjoint system (\ref{sa}). Under natural hypotheses on $A(y,s)$ and $B(y,s)$, we see there that, for each $p(y,s) \in H^{-1}(K)$, there exists exactly one solution $p_i$ to (\ref{sa}), with $\disp p_i \in L^2(0,S;L^2(K))\cap C\left([0,S];H^{-1}(K)\right)$; see Theorem \ref{ta2}.

If we multiply (\ref{sa}) by $K(y)\widehat{v}$ and if we integrate by parts, we find
\begin{equation}\label{3.1}
\displaystyle\int_{\mathbb{R}^N} |D_y| \;\rho_i^2(y) \left(v(y, S; g, \mathbf{h}) - v^{S}(y)  \right) \widehat{v}_i(S)\;K(y)\; dy =
\displaystyle\int_{0}^{S} \displaystyle\int_{\mathbb{R}^N}
 p_i\; \widehat{h}_i\; \chi_{\mathcal{O}_i'}\;K(y)\; dy ds
\end{equation}
so that (\ref{2.14}) becomes
\begin{equation}\label{3.2}
\displaystyle\int_{0}^{S}\int_{\mathcal{O}_i'}  \left(|D_{y,s}|\;h_i  + \alpha_i p_i\right)\;\widehat{h}_i\;K(y)\; dy ds = 0,
\end{equation}
for all  $ \widehat{h}_i  \in L^2(0,S;L^2(K))$. Then $\disp |D_{y,s}|\;h_i  + \alpha_i p_i=0$ in $\mathcal{O}_i' \times (0,S)$. By hypothesis, we obtain
\begin{equation}\label{3.3}
\displaystyle h_i=-\frac{\alpha_i p_i}{|D_{y,s}|}\;\;\mbox{ in } \;\;\mathcal{O}_i' \times (0,S).
\end{equation}

Thus, if $h_1, . . ., h_n$ is the unique Nash equilibrium for the cost functionals $J_1, . . . , J_n$ , which is associated with the state
equation (\ref{1v}), then we get the optimality system
\begin{equation}\label{3.4}
\left|
\begin{array}{l}
\displaystyle\frac{\partial v}{\partial
s} + Lv +Av+B\;.\; \nabla v- \displaystyle\frac{N}{2} v +
\displaystyle\sum_{i = 1}^{n} \frac{\alpha_i p_i}{|D_{y,s}|}\; \chi_{\mathcal{O}'_i} = g\chi_{\mathcal{O}'} \;\; \mbox{ in } \;\; \mathbb{R}^N \times(0,S),\\[9pt]\disp
- p_i' + L p_i+Ap_i-\mbox{div }(Bp_i) -\frac{N}{2}\; p_i-\frac 12\;y\;.\;Bp_i= 0 \;\; \mbox{ in } \;\; \mathbb{R}^N \times (0,S), \\[9pt]\disp
v(0) = 0, \;\;\; p_i(y,S) =|D_y|\; \rho_i^2(y) \left[ v(y,S; g,\mathbf{ h}) -v^{S}(y) \right]  \;\; \mbox{ in } \;\; \mathbb{R}^N.
\end{array} \right.
\end{equation}

We recall that here we are assuming the existence and uniqueness of a Nash equilibrium . We return to that in Section \ref{sec4}.

$\mathbf{Step\; 2.}$ To prove approximate controllability we assume $v^S (y) = 0$ to simplify the calculus, because the optimality system
(\ref{3.4}) is linear (it suffices to use a translation argument).

Let $\zeta\in L^2(K)$ and let us assume that
\begin{equation}\label{3.5}
\left( v( ., S; g, \mathbf{h}(g)), \zeta \right)_{L^2(K)} = 0,
\end{equation}
for all $g \in L^2(0,S;L^2(K))$. We want to show that $\zeta \equiv 0$.

Motivated by (\ref{3.4}), since we suppose $v^S(y) = 0$, we introduce the solution $\{\varphi, \psi_1, \psi_2,
\ldots, \psi_n \}$ of the adjoint system
\begin{equation}\label{3.6}
\left|
\begin{array}{l}
- \displaystyle\frac{\partial\varphi}{\partial s} + L\varphi +A\varphi-\mbox{div }(B\varphi)-
\displaystyle\frac{N}{2}\; \varphi-\frac 12 \;y\;.\;B\varphi = 0 \;\; \mbox{ in } \;\; \mathbb{R}^N \times (0,S),\\[7pt]\disp
\frac{\partial\psi_i}{\partial s} + L\psi_i +A\psi_i+B\;.\;\nabla \psi_i- \displaystyle\frac{N}{2}\; \psi_i = -\frac{1}{|D_{y,s}|}\alpha_i
\varphi\; \chi_{\mathcal{O}'_i} \;\; \mbox{ in } \mathbb{R}^N \times (0,S),\\[7pt]\disp
\varphi(S) = \zeta + \displaystyle\sum_{i = 1}^{n}|D_y|\;
\psi_i(S)\; \rho_i^2 \;\; \mbox{ in } \;\; \mathbb{R}^N,  \\[7pt]\disp
\psi_i(0) = 0 \;\; \mbox{ in } \;\; \mathbb{R}^N.
\end{array} \right.
\end{equation}

Multiplying $(\ref{3.6})_1$ by $K(y) v$ and $(\ref{3.6})_2$ by  $K(y)p_i$ and integrating in $\re^N\times (0,S)$ we obtain
\begin{equation}\label{3.7}
\begin{array}{l}
-\left( \zeta+\sum_{i =
1}^{n} |D_y|\; \psi_i(S) \rho_i^2, v(S)\right)_{L^2(K)}
 \\[7pt]\disp+ \displaystyle\int_{0}^{S}
\displaystyle\int_{\mathbb{R}^N} \varphi  \Big(
\frac{\partial v}{\partial s} + L v +Av+B\;.\;\nabla v-\frac{N}{2}\;v \Big)K(y)\; dy ds = 0,
\end{array}
\end{equation}
and
\begin{equation}\label{3.8}
\displaystyle\sum_{i = 1}^{n} \left( \psi_i(S),
p_i(S) \right)_{L^2(K)} +
\displaystyle\int_{0}^{S} \int_{\mathbb{R}^N}
\displaystyle\sum_{i = 1}^{n}\frac{1}{|D_{y,s}|} \alpha_i \varphi p_i\;K(y)\; \chi_{\mathcal{O}_i'}\; dy ds = 0.
\end{equation}

From the identity (\ref{3.7}) and (\ref{3.8}) and using (\ref{3.6}), we have
\begin{equation}\label{3.9}
\displaystyle -\left( \zeta, v(S)\right)_{L^2(K)}+\int_{0}^{S}\int_{\mathbb{R}^N} \varphi\; g \; K(y)\;\chi_{\mathcal{O}'} \; dy ds  = 0,\;\;\;\; \mbox{ for all } \;\; g \in L^2(0,S;L^2(K)).
\end{equation}
Therefore, if (\ref{3.5}) holds, this equality implies that
\begin{equation}\label{3.9}
\displaystyle \int_{0}^{S}\int_{\mathcal{O}'} \varphi\; g \; K(y)\; dy ds  = 0,\;\;\;\; \mbox{ for all } \;\; g \in L^2(0,S;L^2(K)).
\end{equation}

It follows by the unique continuation, cf. C. Fabre \cite{Fa}, that $\varphi= 0$ on $\re^N\times (0,S)$. Going back to (\ref{3.6}), the initial-boundary value for $\psi_i$  implies $\psi_i=0$ in $\re^N\times (0,S)$ and from the continuity, so that $(\ref{3.6})_3$ gives $\zeta = 0$. This ends the proof of Theorem \ref{mtp}.
\end{proof}

We conclude this section with the proof of Theorem \ref{mt}, as an immediate consequence of Theorem \ref{mtp}.

\noindent{\bf Proof of Theorem \ref{mt} } Let us assume that $f\in L^2(\mathbb{R}^N \times(0,T))$ and there exists a unique Nash equilibrium $\mathbf{w}=(w_1, ..., w_n)$, depending on $f$, given by the inequalities (\ref{n1}). We need to show that the set of solutions $u(x, t, f, \mathbf{w})$ of (\ref{1.1}) evaluated at time $t=T$ is dense in $L^2(\re^N)$, that is, for any $u_0,u^T \in L^2(\re^N)$, $f\in L^2(\mathbb{R}^N \times(0,T))$ and $\epsilon > 0$, there exists a unique Nash equilibrium $\mathbf{w}=\mathbf{w}(f)$ such that the solution $u$ of (\ref{1.1}) satisfies
\begin{equation}\label{caeu}
|u(T)-u^T|_{L^2(\re^N)} \le \epsilon.
\end{equation}

\begin{remark}\label{obsf}


We recall that, if we know a unique Nash equilibrium $\disp \mathbf{h}=(h_1, . . ., h_n)$, depending on $g$, for (\ref{fp}), then its transformation, by the change of variables (\ref{mv2}), is the unique Nash equilibrium $\disp \mathbf{w}=(w_1, . . ., w_n)$ for (\ref{fsn}), which depends on $f$.

We postpone the proof of the existence and uniqueness of a Nash equilibrium $\disp \mathbf{h}=(h_1, . . ., h_n)$ for (\ref{fp}), however, to Section \ref{sec4}. For the moment, let us assume that these properties hold.

\end{remark}

Our plan to prove Theorem \ref{mt} is divided into two steps.

$\bullet$ First step:\quad The case $u^T \in L^2(K)$.

\noindent We make the change of variables $v^S(y) = (T+1)^{\frac N2}\,
u^T((T+1)^{\frac 12} y) $ and $S = \log(T+1)$. We have that $v^S \in
L^2(K)$ and $v^0 = u^0 =0\in L^2(K)$. From Theorem \ref{mtp} and Remark \ref{obsf} we know the
existence of a unique Nash equilibrium $\disp \mathbf{h}=(h_1, . . ., h_n)$ for (\ref{fp}) such that the solution $v(y, s, g, \mathbf{h})$ of (\ref{1v}) satisfies:
\begin{equation}\label{caeu2}
|v(S)-v^S|_{L^2(K)} \le \epsilon.
\end{equation}

Therefore $$ u(x,t) = (1+t)^{-\frac N2}\,
v\left(\frac{x}{\sqrt{1+t}}\,, \log(1+t)\right) $$ satisfies (\ref{1.1}) with
\begin{equation}\nonumber
\left|
\begin{array}{l}
\disp f(x,t) = \left(1+t\right)^{-\frac{N}{2}-1}\, g(\frac{x}{\sqrt{1+t}}, \log(1+t))\\[9pt] \disp
w_i(x,t) = \left(1+t\right)^{-\frac{N}{2}-1}\, h_i(\frac{x}{\sqrt{1+t}}, \log(1+t)),
\end{array}\right.
\end{equation}

and
\begin{align*}
&|u(T)-u^T|_{L^2(\re^N)}^2 \le \int_{\re^N} (1+T)^{-N}\,
e^{|x|^2\big/4(1+T)} \lv v\left(\frac{x}{\sqrt{1+T}}\,, S\right) -
v^S\left(\frac{x}{\sqrt{1+T}}\right)\rv^2 dx\\ &\le \int_{\re^N}
K(1+T)^{-N/2} |v(y,S) - v^S(y)|^2\,dy \le
 (1+T)^{-N/2} |v(S)-v^S|_{L^2(K)}^2 \le \ve^2.
\end{align*}

\vglue .1in

\noindent Second step:\quad $u^T \in L^2(\re^N)$.

\noindent Since $L^2(K) \subset L^2(\re^N)$ with dense inclusion,
there exists a sequence $\{u_n^T\} \subset L^2(K)$ such that
$u_n^T \to u^T$ strongly in $L^2(\re^N)$. From the first step, we know the existence of a unique Nash equilibrium  $\mathbf{w}_n$ such that $u(x, t, f,\mathbf{w}_n)$ the solution of (\ref{1.1}) with $\mathbf{w}=\mathbf{w}_n$ satisfies
\begin{equation*}
\begin{aligned}
\,&|u(T) - u_n^T|_{L^2(\re^N)} \le \frac{\epsilon}{2}.
\end{aligned}
\end{equation*}

Let $\widetilde N$ be such that for every $n > \widetilde N$,
$$|u^T-u_n^T|_{L^2(\re^N)} \le \frac{\epsilon}{2}. $$

Then $u(x, t, f,\mathbf{w}_{\widetilde N})$ the solution of (\ref{1.1}) with $\mathbf{w}=\mathbf{w}_{\widetilde N}$ satisfies
\begin{equation*}
\begin{aligned}
\,&|u(T)-u^T|_{L^2(\re^N)} \le \epsilon.
\end{aligned}
\end{equation*}

This completes the proof of Theorem \ref{mt}.

\Fin
\section{Existence and uniqueness of a Nash equilibrium}\label{sec4}

Let us consider the cost functionals $J_i$ defined by (\ref{fp}) corresponding to the state equation (\ref{1v}). Firstly, we are going to rewrite the system (\ref{2.14}). For this, we consider the following functional spaces
\begin{equation}\label{4.1}
\left|\begin{array}{l}
\displaystyle \mathcal{H}_i= L^2(0,S;L^2(K)),\\[9pt]\disp
\mathcal{H}=\prod_{i=1}^n \mathcal{H}_i=\left(L^2(0,S;L^2(K))\right)^n,
\end{array}
\right.
\end{equation}
and the (resolvent) operators $L_i\in \mathcal{L}\left(\mathcal{H}_i,L^2(K)\right)$ defined as (see Section \ref{sec2}) $L_ih_i=v_i(S)$, where $v_i(S)$ is $v_i(y,s;h_i)$, evaluated at $s=S$, is the unique solution of the problem:
\begin{equation}\label{4.2}
\left|
\begin{array}{l}
\displaystyle\frac{\partial v_i}{\partial s} + L v_i +Av_i+B\;.\;\nabla v_i- \frac{N}{2} v_i = h_i \chi_{\mathcal{O}'_i} \;\;\; \mbox{ in } \;\; \mathbb{R}^N \times (0,S)\\[7pt]\disp
v_i(0) = 0 \;\; \mbox{ in } \;\; \mathbb{R}^N.
\end{array} \right.
\end{equation}

Note that $v_i$ belongs to $C\left([0,S];H^1(K)\right)$, with
\begin{equation}\label{cis}
\disp |v_i|_{C\left([0,S];H^1(K)\right)}\leq C_i(T)|v_i|_{\mathcal{H}_i},
 \end{equation}

 and $L_ih_i=v_i(S)$ is linear, for each $i$, justifying that $L_i$ belongs in $\mathcal{L}\left(\mathcal{H}_i,H^1(K)\right)$ or in $\mathcal{L}\left(\mathcal{H}_i,L^2(K)\right)$.

Remember that in the definition of $J_i$, given in (\ref{fp}), the argument $v(y, s, g,\textbf{h})$ is the unique solution of the initial-boundary value problem (\ref{1v}) with right-hand side
$$\disp g(y,s) \chi_{\mathcal{O}'} + \sum_{i = 1}^{n} h_i(y,s) \chi_{\mathcal{O}'_i}.$$

Therefore, for fixed $g\in L^2(0,S;L^2(K))$ we obtain $\disp z \in C\left([0, S]; L^2(K)\right)$, as the unique solution of the initial-boundary value problem
\begin{equation}\label{4.3}
\left|
\begin{array}{l}
\displaystyle\frac{\partial z}{\partial s} + Lz+Az+B\;.\;\nabla z -\frac{N}{2}\; z = g(y,s) \chi_{\mathcal {O'}} \;\; \mbox{ in } \;\; \mathbb{R}^N \times (0,S), \\[7pt]\disp
z(x, 0) = 0 \;\; \mbox{ in } \;\; \mathbb{R}^N.
\end{array}
\right.
\end{equation}

Thus, from (\ref{4.2}) and (\ref{4.3}) it follows that $\disp \sum_{i=1}^n v_i+z$ is also a solution of (\ref{1v}). From the uniqueness of the solution
$v(y, s, g,\textbf{h})$ of (\ref{1v}) we can write
$$ \disp v(y, s, g,\textbf{h})=\displaystyle z(y, s, g)+\sum_{i = 1}^{n} v_i(y, s, \textbf{h})=Q_0(s) g+\sum_{i=1}^n Q_i(s) h_i, \;\;0\leq s \leq S.$$

In the last identity above we have used the notation of Remark \ref{re}. Besides, from the continuity it can be evaluated at
$s = S$, which gives
\begin{equation}\label{4.4}
 \disp v(y, S, g,\textbf{h})=\displaystyle z(y, S, g)+\sum_{i = 1}^{n} v_i(y, S, \textbf{h})=z^S(y)+\sum_{i=1}^nL_ih_i(y,S)=L_0 g+\sum_{i=1}^n L_i h_i.
 \end{equation}

Again, in the last identity above we have used the notation of Remark \ref{re}. From (\ref{4.4}), we modify the $J_i$ obtaining
\begin{equation}\label{4.5}
\begin{array}{l}
\disp J_i(g, h_1, \ldots, h_i,\ldots, h_n)\\[9pt]\disp
= \frac 12 \int_0^S\int_{\mathcal{O}_i'}|D_{y,s}|\;h_i^2\;K(y)\;dyds+\frac{\alpha_i}{2} \int_{\re^N} |D_y|\;\rho_i^2(y)\left( \sum_{i=1}^nL_ih_i-\eta^S(y) \right)^2K(y)\;dy   \\[15pt]\disp
=\frac{1}{2} \left| |D_{y,s}|^{\frac 12}h_i
\right|_{\mathcal{H}_i}^{2} + \frac{\alpha_i}{2} \left| |D_y|^{\frac 12} \rho_i
\left( \displaystyle\sum_{i = 1}^{n} L_i h_i - \eta^{S}(y)
\right) \right|_{L^2(K)}^{2},
\end{array}
\end{equation}
with $\eta^{S}(y) = v^{S}(y) - z^{S}(y)$.

It is easy to see that, for each $i$, the functional $J_i$ is convex and lower semi-continuous. Therefore, $\disp \mathbf{h} = (h_1, . . ., h_n) \in \mathcal{H}$ is the Nash equilibrium for the functionals $J_i$ if, and only if, its Gateaux derivative is zero, that is,
\begin{equation}\label{4.6}
\disp \left( |D_{y,s}| h_i,\widehat{h}_i \right)_{\mathcal{H}_i} + \alpha_i \left( \rho_i |D_y|\left(\sum_{j = 1}^{n} L_j h_j - \eta^{S}\right), \rho_i L_i \widehat{h}_i\right)_{L^2(K)} = 0,
\end{equation}
for all $\disp \widehat{h}_i \in L^2(0,S;L^2(K))$.

As the adjoint $L^*_i$ of $L_i$ belongs to $\mathcal{L}\left(L^2(K),\mathcal{H}_i\right)$, then from (\ref{4.6}) one gets
\begin{equation}\label{4.7}
\disp |D_{y,s}|\; h_i + \alpha_i L^*_i \left(\rho_i^2(y) |D_y|\sum_{j = 1}^{n} L_j h_j \right) =\disp \alpha_i
L^*_i \left(\rho_i^2(y)|D_y| \eta^{S}\right),\;\;i=1, ..., n.
\end{equation}

Next, we change the notation to obtain a better formulation of linear system (\ref{4.7}). In fact, setting
$$\disp \xi=(\xi_1, ..., \xi_n),\;\mbox{ with }\; \xi_i= \alpha_i L^*_i \left(\rho_i^2(y)|D_y| \eta^{S}\right),$$

and considering an operator $\mathfrak{L}\in \mathcal{L}(\mathcal{H},\mathcal{H})$ defined by $\mathfrak{L}\mathbf{h}$ with the $n$ components
$$\disp (\mathfrak{L}\mathbf{h})_i=|D_{y,s}|\; h_i + \alpha_i L^*_i \left(\rho_i^2(y) |D_y|\sum_{j = 1}^{n} L_j h_j \right),$$

then the system (\ref{4.7}) is rewritten
\begin{equation}\label{4.8}
\disp \mathfrak{L}\mathbf{h}=\xi \;\; \mbox{ in } \;\mathcal{H}.
\end{equation}

Thus, we must prove that the linear equation (\ref{4.8}) has a solution $\mathbf{h} = (h_1, . . . , h_n)$ in $\mathcal{H}$ for each $\disp \xi=(\xi_1, ..., \xi_n)$ in $\mathcal{H}$. The solvability of (\ref{4.8}) will be established as application of the Lax-Milgram Lemma, with restrictions on $\alpha_i$ and $\rho_i$ .

\begin{theorem}\label{enp} Assume that
\begin{equation}\label{4.10}
\disp \alpha_i\;|D_y|\;|\rho_i|_{\infty}\;\; \mbox{ is small enough, for any } \; i=1, ..., n.
\end{equation}

Then $\mathfrak{L}$ is an invertible operator. It means, there exists a Nash equilibrium $\mathbf{h} = (h_1, . . ., h_n)$ for $J_i$.
\end{theorem}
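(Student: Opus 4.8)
The plan is to read (\ref{4.8}) as a variational problem on the Hilbert space $\mathcal{H}=\prod_{i=1}^{n}\mathcal{H}_i$ and apply the Lax--Milgram Lemma. First I would introduce the bilinear form $a\colon\mathcal{H}\times\mathcal{H}\to\re$ defined by
$$a(\mathbf{h},\widehat{\mathbf{h}})=(\mathfrak{L}\mathbf{h},\widehat{\mathbf{h}})_{\mathcal{H}}=\sum_{i=1}^{n}\big(|D_{y,s}|\,h_i,\widehat h_i\big)_{\mathcal{H}_i}+\sum_{i=1}^{n}\alpha_i\Big(\rho_i^{2}|D_y|\sum_{j=1}^{n}L_jh_j,\ L_i\widehat h_i\Big)_{L^2(K)}.$$
By the computation leading to (\ref{4.6})--(\ref{4.7}), solving $\mathfrak{L}\mathbf{h}=\xi$ in $\mathcal{H}$ is equivalent to solving $a(\mathbf{h},\widehat{\mathbf{h}})=(\xi,\widehat{\mathbf{h}})_{\mathcal{H}}$ for all $\widehat{\mathbf{h}}\in\mathcal{H}$, and the right-hand side is a continuous linear functional on $\mathcal{H}$ for each fixed $\xi\in\mathcal{H}$.

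Continuity of $a$ is routine: from (\ref{estd}) we have $|D_{y,s}|<k_2$ and $|D_y|<k_4$, from (\ref{cis}) each $L_i$ is bounded as an operator $\mathcal{H}_i\to L^2(K)$ (with norm independent of $\alpha_i$ and $\rho_i$, since $L_i$ is defined through (\ref{4.2})), and $\rho_i\in L^\infty$; Cauchy--Schwarz then gives $|a(\mathbf{h},\widehat{\mathbf{h}})|\le C\,|\mathbf{h}|_{\mathcal{H}}|\widehat{\mathbf{h}}|_{\mathcal{H}}$ with $C$ depending only on $n$, $k_2$, $k_4$, $\max_i\alpha_i$, $\max_i|\rho_i|_\infty$ and $M:=\max_i\|L_i\|$.

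The core step is coercivity. Testing with $\widehat{\mathbf{h}}=\mathbf{h}$, the first sum is bounded below by $k_1|\mathbf{h}|_{\mathcal{H}}^2$ using $|D_{y,s}|>k_1$. The second sum has no definite sign, so I would estimate it in absolute value: from $|\sum_j L_jh_j|_{L^2(K)}\le M\sqrt n\,|\mathbf{h}|_{\mathcal{H}}$ and $|L_ih_i|_{L^2(K)}\le M|h_i|_{\mathcal{H}_i}$ together with $|D_y|<k_4$, it is at most $M^2 n\,k_4\big(\max_i\alpha_i|\rho_i|_\infty^2\big)|\mathbf{h}|_{\mathcal{H}}^2$. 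Hence
$$a(\mathbf{h},\mathbf{h})\ \ge\ \Big(k_1-M^2 n\,k_4\max_i\big(\alpha_i|\rho_i|_\infty^2\big)\Big)|\mathbf{h}|_{\mathcal{H}}^2,$$
so that under the smallness hypothesis (\ref{4.10}) --- concretely, once $M^2 n\,k_4\max_i(\alpha_i|\rho_i|_\infty^2)<k_1$ --- the constant $\delta:=k_1-M^2 n\,k_4\max_i(\alpha_i|\rho_i|_\infty^2)$ is positive and $a$ is coercive. The Lax--Milgram Lemma then yields, for every $\xi\in\mathcal{H}$, a unique $\mathbf{h}\in\mathcal{H}$ with $\mathfrak{L}\mathbf{h}=\xi$ and $|\mathbf{h}|_{\mathcal{H}}\le\delta^{-1}|\xi|_{\mathcal{H}}$, i.e.\ $\mathfrak{L}$ is an isomorphism of $\mathcal{H}$. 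Taking $\xi$ as in (\ref{4.8}), the resulting $\mathbf{h}$ is the unique solution of the Euler--Lagrange system (\ref{4.7}), which --- since each $J_i$ is convex and lower semicontinuous --- is precisely the unique Nash equilibrium $\mathbf{h}=(h_1,\dots,h_n)$ for $J_1,\dots,J_n$ associated with $g$.

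I expect the only genuine subtlety to be making the coercivity threshold explicit enough that the qualitative condition (\ref{4.10}) really suffices: one must keep track of the combinatorial factor $n$ and of the operator norm $M=\max_i\|L_i\|$ coming from the energy estimate (\ref{cis}), and verify that $M$ and the constants $k_1,\dots,k_4$ do not themselves depend on the $\alpha_i$ or $\rho_i$ (they do not, by construction of $L_i$). Everything else is bookkeeping with the bounds in (\ref{estd}) and the boundedness of $L_i$ and $L_i^*$.
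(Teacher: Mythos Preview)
Your proposal is correct and follows essentially the same route as the paper: write $(\mathfrak{L}\mathbf{h},\widehat{\mathbf{h}})_{\mathcal{H}}$ as a bilinear form, use $|D_{y,s}|>k_1$ for the diagonal part, bound the cross terms $\sum_i\alpha_i(\rho_i^2|D_y|\sum_jL_jh_j,L_ih_i)_{L^2(K)}$ in absolute value via $|D_y|<k_4$ and $\|L_i\|\le C_S$, and invoke Lax--Milgram under the smallness hypothesis. The only minor difference is that the paper first singles out the case $n=1$, where the second sum is a nonnegative complete square and coercivity holds with no smallness assumption at all; you treat all $n$ uniformly, which is perfectly fine for the statement as written.
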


\begin{proof} In the case $n=1$ we have $\mathcal{H}=\mathcal{H}_1$, $\mathbf{h}=h_1$ and therefore
\begin{equation}\label{4.11}
\begin{array}{l}
\disp \left(\mathfrak{L}\mathbf{h},\mathbf{h}\right)_{\mathcal{H}}=\left( |D_{y,s}|\; h_1 + \alpha_1 L^*_1 \left(\rho_1^2(y) |D_y| L_1 h_1 \right),h_1\right)_{\mathcal{H}_1}\\[5pt]\disp
=\left|\;|D_{y,s}|^{\frac 12}\; h_1\right|_{\mathcal{H}_1}^2+\alpha_1 \left| \rho_1(y)\; |D_y|^{\frac 12} L_1 h_1 \right|_{L^2(K)}^2\geq k_1|h_1|_{\mathcal{H}_1}^2,
\end{array}
\end{equation}
where we have used above the inequality (\ref{estd}). Then $\mathfrak{L}$ is coercive and by the Lax-Milgram Lemma, the equation
$\mathfrak{L}\mathbf{h} = \xi$ in $\mathcal{H}$, has a solution $\mathbf{h}$.

Now, suppose $n>1$. Again if we use (\ref{estd}) and the fact that  $L_i \in \mathcal{L}\left(\mathcal{H}_i, L^2(K)\right)$, one has
\begin{equation}\label{4.12}
\begin{array}{l}
\disp \left(\mathfrak{L}\mathbf{h},\mathbf{h}\right)_{\mathcal{H}}=\sum_{i=1}^n\left( |D_{y,s}|\; h_i,h_i\right)_{\mathcal{H}_i}+\sum_{i=1}^n\left( \alpha_i \rho_i\; |D_y| L_i h_i ,\rho_i\;L_ih_i\right)_{L^2(K)}\\[5pt]\disp
\geq\sum_{i=1}^n k_1|h_i|_{\mathcal{H}_i}^2+\sum_{i=1}^n\left( \alpha_i \rho_i\; |D_y| L_i h_i ,\rho_i\;L_ih_i\right)_{L^2(K)}.
\end{array}
\end{equation}

Next, it will be analyzed the second term on the right-hand side of (\ref{4.12}). In fact, taking the absolute value one gets
\begin{equation}\label{4.13}
\begin{array}{l}
\disp \left|\sum_{i=1}^n\left( \alpha_i \rho_i\; |D_y| L_i h_i ,\rho_i\;L_ih_i\right)_{L^2(K)}\right|\leq \overline{\alpha}\;k_4\;\overline{\rho}\sum_{i,j=1}^n|L_jh_j|_{L^2(K)}\;|L_ih_i|_{L^2(K)} \\[7pt]\disp
\leq \overline{\alpha}\;k_4\;\overline{\rho}\;\left[ \left(\sum_{i=1}^n|L_ih_i|_{L^2(K)}^2\right)^{\frac 12}(n)^{\frac 12}   \right]^2
\leq \overline{\alpha}\;k_4\;\overline{\rho}\;n\;C_S^2\sum_{i=1}^n|h_i|_{\mathcal{H}_i}^2,
\end{array}
\end{equation}
where $k_4$ is defined in (\ref{estd}), $\overline{\alpha}= \max\{\alpha_1, . . ., \alpha_n\}$, $C_S =\max\{C_i(S)\}$, with $C_i(S)$ defined in (\ref{cis}) and $\overline{\rho} = \max\{\rho_1, ..., \rho_n\}$. As for hypothesis $\disp \alpha_i\;|D_y|\;|\rho_i|_{\infty}$ is small enough, we assume
\begin{equation}\label{dmini}
\overline{\alpha}\;k_4\;\overline{\rho}\;n\;C_S^2 \leq \frac {k_1}{2}.
\end{equation}

Substituting (\ref{dmini}), into (\ref{4.13}) and using (\ref{4.12}) we finally obtain
\begin{equation}\label{4.14}
\begin{array}{l}
\disp \left(\mathfrak{L}\mathbf{h},\mathbf{h}\right)_{\mathcal{H}}\geq \frac {k_1}{2} \;|\mathbf{h}|_{\mathcal{H}}^2.
\end{array}
\end{equation}

Thus (\ref{4.11}) and (\ref{4.14}) say that Lax-Milgram Lemma is applicable and there exists only one $\mathbf{h}$ solution of $\mathfrak{L}\mathbf{h} = \xi$ in $\mathcal{H}$ for all $i = 1, . . .,n$.

\end{proof}

\section{Appendix: Existence, uniqueness and regularity of the state}\label{sec5}

In this Appendix, we recall a theoretical result for the linear system
\begin{equation}\label{a1}
\left| \begin{array}{l}
 \disp v_s + Lv+A(y,s)v+B(y,s).\nabla v - \frac{N}{2}\;v = \varphi(y,s) \;\mbox{ in }\; \mathbb{R}^N \times (0,S), \\
 v(y, 0) = v_0(y)  \mbox{ in } \mathbb{R}^N,
 \end{array}\right.
\end{equation}
where $A(y,s) \in L^\infty(\re^N\times(0,S))$, \,\, $B(y,s) \in (L^\infty(\re^N\times(0,S)))^N$.

We have the following:
\begin{theorem}\label{eul} Given $\varphi\in L^2(0,S;L^2(K))$ and $v_0 \in L^2(K)$, there exists a unique solution $v$ in the space $W\left(0, S, H^1(K), H^{-1}(K)\right)$ of the problem (\ref{a1}). Moreover, if $v_0 \in H^1(K)$, then $v \in W\left(0, S, H^2(K), L^2(K)\right)$.
\end{theorem}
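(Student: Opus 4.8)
The plan is to regard \eqref{a1} as a linear parabolic Cauchy problem in the Gelfand triple $H^1(K)\hookrightarrow L^2(K)\hookrightarrow H^{-1}(K)$ and to invoke (after verifying its hypotheses) the classical existence theory for abstract parabolic equations in such triples, as in \cite{9}. Write the time-dependent operator $\mathcal{A}(s)\colon H^1(K)\to H^{-1}(K)$ by $\langle \mathcal{A}(s)v,\phi\rangle=\int_{\re^N}\nabla v\cdot\nabla\phi\,K\,dy+\int_{\re^N}\big(A(y,s)v+B(y,s)\cdot\nabla v-\frac N2 v\big)\phi\,K\,dy$, so that \eqref{a1} reads $v_s+\mathcal{A}(s)v=\varphi$, $v(0)=v_0$. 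Two properties must be checked. First, since $A\in L^\infty$ and $B\in(L^\infty)^N$, the family $\mathcal{A}(s)$ is bounded from $H^1(K)$ to $H^{-1}(K)$ uniformly in $s$ and measurable in $s$; here one uses $v\in H^1(K)\Leftrightarrow K^{1/2}v\in H^1(\re^N)$ and the identity $\int_{\re^N}(Lv)vK\,dy=\int_{\re^N}|\nabla v|^2K\,dy$ from \eqref{desigualdades}. Second, a G\aa rding-type estimate holds: combining $\int(Lv)vK=\int|\nabla v|^2K$, the weighted Poincar\'e inequality $\frac N2\int K|v|^2\le\int K|\nabla v|^2$ of \eqref{desigualdades}, and Young's inequality applied to $|\int(B\cdot\nabla v)vK|\le\frac14\int K|\nabla v|^2+C\int K|v|^2$, one gets $\langle\mathcal{A}(s)v,v\rangle\ge c_0\,|v|_{H^1(K)}^2-\beta\,|v|_{L^2(K)}^2$ with $c_0>0$ and $\beta$ depending only on $N$, $\|A\|_\infty$, $\|B\|_\infty$.

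With these in hand I would run a Faedo--Galerkin scheme in the orthonormal basis of $L^2(K)$ formed by the eigenfunctions of $L$ (available because $L^{-1}$ is self-adjoint and compact on $L^2(K)$, cf. \eqref{desigualdades}). The finite-dimensional systems have unique absolutely continuous solutions $v^m$ by Carath\'eodory's theorem, since the coefficients are $L^\infty$ in $s$. Testing the $m$-th system against $v^m$ and using the G\aa rding estimate and Gronwall's inequality over the finite interval $(0,S)$ bounds $v^m$ in $L^\infty(0,S;L^2(K))\cap L^2(0,S;H^1(K))$, with bound depending only on $|v_0|_{L^2(K)}$ and $|\varphi|_{L^2(0,S;L^2(K))}$; comparison in the equation then bounds $v^m_s$ in $L^2(0,S;H^{-1}(K))$. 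Extracting a weakly convergent subsequence and passing to the limit (routine, since the problem is linear, so the compact embedding is not needed here) yields a solution $v\in W(0,S,H^1(K),H^{-1}(K))$; this space embeds continuously in $C([0,S];L^2(K))$ by \eqref{4imersoes}, so $v(0)=v_0$ makes sense. Uniqueness follows by applying the same energy estimate to the difference of two solutions, which solves the homogeneous problem with zero data, whence Gronwall forces it to vanish.

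For the improved regularity when $v_0\in H^1(K)$, I would return to the approximations and carry out the higher-order energy estimate obtained by testing the $m$-th system against $Lv^m$ in $L^2(K)$. Using $(v^m_s,Lv^m)_{L^2(K)}=\frac12\frac{d}{ds}\int_{\re^N}|\nabla v^m|^2K\,dy$, the fact that $|Lv^m|_{L^2(K)}$ is equivalent to the $H^2(K)$-norm of $v^m$ (since $D(L)=H^2(K)$ by \eqref{desigualdades}), and Young's inequality to absorb the lower-order terms $Av^m$, $B\cdot\nabla v^m$, $\frac N2 v^m$ and $\varphi$ into $|Lv^m|_{L^2(K)}^2$, one arrives at $\frac{d}{ds}|\nabla v^m|_{L^2(K)}^2+|Lv^m|_{L^2(K)}^2\le C\big(|\varphi|_{L^2(K)}^2+|v^m|_{H^1(K)}^2\big)$; integrating and applying Gronwall together with the $L^2(0,S;H^1(K))$ bound already obtained and $|v_0|_{H^1(K)}$ bounds $v^m$ in $L^\infty(0,S;H^1(K))\cap L^2(0,S;H^2(K))$, hence $v^m_s$ in $L^2(0,S;L^2(K))$. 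Passing to the limit gives $v\in W(0,S,H^2(K),L^2(K))$, and then $v\in C([0,S];H^1(K))$ by \eqref{4imersoes}. The one genuinely delicate point is the coercivity step: the drift $B\cdot\nabla v$ is \emph{not} skew-adjoint in $L^2(K)$ (the weight $K$ is not divergence-free relative to $B$), so one cannot expect true coercivity and must accept the shift $-\beta\,|v|_{L^2(K)}^2$; the weighted Poincar\'e inequality of \eqref{desigualdades} is exactly what is needed to make the principal part $L-\frac N2$ coercive on $H^1(K)$, and everything else is standard linear parabolic theory.
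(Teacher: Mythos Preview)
Your argument is correct and follows the same overall strategy as the paper: a Faedo--Galerkin scheme, a first energy estimate obtained by testing against $v^m$, Gronwall, weak compactness, and then a second estimate for the improved regularity when $v_0\in H^1(K)$. The only substantive difference is in that second estimate. The paper tests the approximate equation against $v_m'$, obtaining directly $v_m'\in L^2(0,S;L^2(K))$ and $v_m\in L^\infty(0,S;H^1(K))$, and then invokes elliptic regularity for $L$ to recover $v\in L^2(0,S;H^2(K))$. You instead test against $Lv^m$, which is legitimate precisely because you chose the eigenfunction basis of $L$ (so $Lv^m\in V_m$), and this yields the $L^2(0,S;H^2(K))$ bound in one stroke via the equivalence of $|Lv|_{L^2(K)}+|v|_{L^2(K)}$ with $|v|_{H^2(K)}$ coming from $D(L)=H^2(K)$. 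Both routes are standard; yours is slightly more self-contained in that it avoids the separate appeal to elliptic regularity, while the paper's route works for an arbitrary Hilbertian basis of $H^1(K)$ and does not require the spectral one.
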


 {\sc Sketch of the proof:} We will employ Galerkin method. In fact, let $\disp(w_i)_{i \in \mathbb{N}}$ be a Hilbertian basis of $H^1(K)$. Represent by $V_m$ the subspace generated by $\disp \{w_1, ..., w_n\}$ and let us consider the finite-dimensional problem

\begin{equation}\label{a2}
\left|
\begin{array}{l}
\disp \mbox{ Find }\; v_m \in V_m \;\;\mbox{ solution of }\\[7pt]\disp
(v_m',z)+(Lv_m,z)+(Av_m,z)+(B.\nabla v_m,z)-\frac N2 (v_m,z)=(\varphi,z) \;\; \mbox{ for all } \; z_m \in V_m \\[7pt]\disp
v_m(0) =v_{0m}\to v_0 \;\; \mbox{ strongly in } \;\; L^2(K)
\end{array}
\right.
\end{equation}

To obtain the first a priori estimate, we take $z=v_m$ in (\ref{a2}). We obtain:
\begin{equation}\label{a3}
\begin{array}{l}
\disp \frac 12 \frac{d}{ds} |v_m(s)|^2_{L^2(K)}+|v_m(s)|^2_{H^1(K)}\\[7pt]\disp
\leq A_0|v_m(s)|^2_{L^2(K)}+ B_0 |v_m(s)|_{H^1(K)} |v_m(s)|_{L^2(K)}+|\varphi(s)|_{L^2(K)}|v_m(s)|_{L^2(K)},
\end{array}
\end{equation}
where $A_0=|A|_{L^\infty(\re^N\times(0,S))}$, and $B_0=|B|_{(L^\infty(\re^N\times(0,S)))^N}$.

Hence by integration from $0$ to $s\leq S$, in the local interval of existence of $v_m(s)$, and using Young's inequalities and, again, the Gronwall's inequality yields
\begin{equation}\label{a4}
\begin{array}{l}
\disp |v_m(s)|_{L^2(K)}\leq C\;\;\; \mbox{ and } \;\;\;\int_0^S|v_m(s)|^2_{H^1(K)}ds \leq C,
\end{array}
\end{equation}
after the extension of $v_m(s)$ to $[0,S]$.

With the estimates (\ref{a4}) we take the limit in (\ref{a2}) obtaining a function $v$ in $W\left(0, S, H^1(K), H^{-1}(K)\right)$ which is weak solution of (\ref{a1}). We also have uniqueness.

We furthermore suppose that $v_0 \in H^1(K)$. In the approximate system (\ref{a2}) we suppose $v_{0m} \to v_0$ strongly in $H^1(K)$. Then we can obtain a second a priori estimate. We take $z=v'_m(t)$ in (\ref{a2}). We obtain:
$$
\aligned
|v_m'(s)|_{L^2(K)}^2 &+ \frac 12\,\frac{d}{ds}\,|v_m(s)|_{H^1(K)}^2 \le A_0C_0|v_m(s)|_{H^1(K)}\,|v_m'(s)|_{L^2(K)} \,\\[7pt]
&+ B_0|v_m(s)|_{H^1(K)}\,|v_m'(s)|_{L^2(K)} + |\varphi(s)|_{L^2(K)}\,|v_m'(s)|_{L^2(K)},
\endaligned
$$
where we have used the Poincar\'e inequality, i.e., $|v_m(s)|_{L^2(K)}\leq C_0|v_m(s)|_{H^1(K)}$ (see $(\ref{desigualdades})_2)$.

Thus,
$$
\frac 14\,|v_m'(s)|_{L^2(K)}^2 + \frac{d}{ds}\,|v_m(s)|_{H^1(K)}^2 \le 2|\varphi(s)|_{L^2(K)}^2 + 2(A_0^2C_0^2 + B_0^2)|v_m(s)|_{H^1(K)}^2.
$$

By Gronwall's lemma we obtain:
\begin{equation}\label{a5}
\begin{array}{l}
\disp \int_0^S|v_m'(s)|_{L^2(K)}ds\leq C\;\;\; \mbox{ and } \;\;\;|v_m(s)|_{H^1(K)} \leq C, \;\;\mbox{ on } \; 0\leq s \leq S.
\end{array}
\end{equation}

We obtain a strong solution by the process of regularity of an elliptic equation. We obtain a strong solution $v \in W\left(0, S, H^2(K), L^2(K)\right)\subset C([0,T], H^1(K))$, by (\ref{4imersoes}).

\Fin

Next, we give a precise definition of (\ref{sa}) in the sense of transposition and prove an existence and uniqueness result. The main question we are concerned here consists in finding a solution $p$ of the of the parabolic problem:
\begin{equation}\label{a6}
\left|
\begin{array}{l}
\disp -p_s + Lp + Ap - \mbox{div}(Bp) - \frac N2\,p - \frac 12\, y\;.\;Bp=0\; \mbox{ in } \;\re^N \times (0,S),\\[5pt]\disp
p(y,S) = \xi \quad\mbox{ in }\quad \re^N.
\end{array} \right.
\end{equation}
when $\xi \in H^{-1}(K)$. A function $p \in L^2(0,S;L^2(K))$ is called an solution of (\ref{a6}) obtained by transposition if
$$
\int_0^S \int_{\re^N} p(y,s)\varphi(y,s)K(y)\,dyds = \langle \xi,v(S)\rangle
$$
for any solution $v$ of the problem (\ref{a1}) with $\varphi \in L^2(0,S;L^2(K))$ and $v_0(y)=0$ in $\re^N$. We represent by $\disp \langle \;, \;\rangle$ the duality pairing between $H^{-1}(K)$ and $H^{1}(K)$. We have

\begin{theorem}\label{ta2} If $\xi \in H^{-1}(K)$, $A(y,s) \in L^\infty(\re^N\times(0,S))$,  $B(y,s) \in (L^\infty(\re^N\times(0,S)))^N$ then there exists an unique solution $p \in L^2(0,S;L^2(K))\cap C\left([0,S];H^{-1}(K)\right)$ of problem (\ref{a6}).

\end{theorem}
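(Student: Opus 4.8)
The plan is to obtain $p$ by the transposition (duality) method, using Theorem \ref{eul} as the forward solvability result. First I would set up the solution operator for the forward problem: for $\varphi \in L^2(0,S;L^2(K))$ let $v = v(\varphi)$ be the unique solution of \eqref{a1} with $v_0 = 0$ given by Theorem \ref{eul}; by the energy estimates underlying that theorem, the map $\varphi \mapsto v(S)$ is continuous from $L^2(0,S;L^2(K))$ into $L^2(K)$, hence into $H^1(K)$ only after the extra regularity hypothesis, but for transposition I only need $v(S)\in H^1(K)$ when $\varphi$ is smooth; more efficiently, I use that $\varphi \mapsto v$ is continuous into $W(0,S,H^1(K),H^{-1}(K)) \hookrightarrow C([0,S];L^2(K))$ so that $\varphi \mapsto v(S)$ is bounded from $L^2(0,S;L^2(K))$ to $H^1(K)$ by the second part of Theorem \ref{eul} applied with the forward data in $H^1(K)$; in fact the cleanest route is to note the backward adjoint computation only pairs $\langle \xi, v(S)\rangle$ with $\xi \in H^{-1}(K)$ and $v(S)\in H^1(K)$, and Theorem \ref{eul} guarantees $v(S)\in H^1(K)$ whenever... — to avoid this friction I would instead define the functional directly.

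Concretely: consider the linear functional $\Lambda \colon L^2(0,S;L^2(K)) \to \mathbb{R}$, $\Lambda(\varphi) = \langle \xi, v(\varphi)(S)\rangle$, where $\langle\,\cdot\,,\,\cdot\,\rangle$ is the $H^{-1}(K)$–$H^1(K)$ pairing. The key estimate is $|v(\varphi)(S)|_{H^1(K)} \le C\,|\varphi|_{L^2(0,S;L^2(K))}$, which I would derive from the Galerkin argument of Theorem \ref{eul}: multiplying the equation by $v_m'$ and using the Poincaré-type inequality $(\ref{desigualdades})_2$ exactly as in the sketch there gives $\sup_{s}|v_m(s)|_{H^1(K)} + |v_m'|_{L^2(0,S;L^2(K))} \le C|\varphi|_{L^2(0,S;L^2(K))}$ (here $v_{0m}=0$, so no $H^1(K)$ norm of initial data appears), and passing to the limit yields the bound on $v(S)$. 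Hence $|\Lambda(\varphi)| \le |\xi|_{H^{-1}(K)}\,|v(\varphi)(S)|_{H^1(K)} \le C\,|\xi|_{H^{-1}(K)}\,|\varphi|_{L^2(0,S;L^2(K))}$, so by the Riesz representation theorem there is a unique $p \in L^2(0,S;L^2(K))$ with
\begin{equation}\nonumber
\int_0^S\!\!\int_{\re^N} p(y,s)\varphi(y,s)K(y)\,dy\,ds = \langle \xi, v(\varphi)(S)\rangle \quad \text{for all } \varphi \in L^2(0,S;L^2(K)).
\end{equation}
This is precisely the transposition solution, and uniqueness is immediate since the identity determines $p$ against a dense set of test functions.

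It remains to upgrade the regularity to $p \in C([0,S];H^{-1}(K))$ and to check that $p$ solves \eqref{a6} in the distributional sense with the prescribed terminal value. For the distributional equation I would take $\varphi$ running over $C_c^\infty$ test functions in $\re^N\times(0,S)$, integrate by parts in \eqref{a1} against $p$, and read off that $-p_s + Lp + Ap - \mathrm{div}(Bp) - \tfrac N2 p - \tfrac12 y\cdot Bp = 0$ in $\mathcal D'$; this also shows $p_s \in L^2(0,S;H^{-1}(K))$ once the spatial terms are identified as elements of $L^2(0,S;H^{-1}(K))$ (using $A,B\in L^\infty$, $L\colon H^1(K)\to H^{-1}(K)$ bounded, and $p\in L^2(0,S;L^2(K))\subset L^2(0,S;H^{-1}(K))$), whence $p \in W(0,S,L^2(K),H^{-1}(K))$ and the embedding into $C([0,S];H^{-1}(K))$ in the spirit of \eqref{4imersoes} gives continuity; evaluating the transposition identity with $\varphi$ supported near $s=S$ and comparing with the integration-by-parts boundary term pins down $p(S)=\xi$ in $H^{-1}(K)$. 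The main obstacle I anticipate is the careful handling of the first-order and zeroth-order terms $Ap$, $\mathrm{div}(Bp)$, $\tfrac12 y\cdot Bp$ in the weighted spaces — in particular justifying the integration by parts that moves $\mathrm{div}(Bp)$ onto $B\cdot\nabla v$ in the presence of the Gaussian weight $K$, and controlling the unbounded coefficient $\tfrac12 y$ via the inequality $(\ref{desigualdades})_1$; once these are in place the argument is the standard transposition scheme.
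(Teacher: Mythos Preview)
Your transposition argument for existence and uniqueness in $L^2(0,S;L^2(K))$ is correct and coincides with the paper's: define the linear form $\varphi\mapsto \langle \xi, v(\varphi)(S)\rangle$, use the bound $|v(S)|_{H^1(K)}\le C|\varphi|_{L^2(0,S;L^2(K))}$ coming from the second estimate in Theorem~\ref{eul} with $v_0=0$, and apply Riesz. So far the two proofs are identical.

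The gap is in your upgrade to $C([0,S];H^{-1}(K))$. You claim that from the distributional equation one reads $p_s\in L^2(0,S;H^{-1}(K))$, citing ``$L\colon H^1(K)\to H^{-1}(K)$ bounded''. But you only know $p\in L^2(0,S;L^2(K))$, not $L^2(0,S;H^1(K))$, so that mapping property is inapplicable: for $p(s)\in L^2(K)$ the term $Lp(s)$ lies a priori only in $H^{-2}(K)$, not $H^{-1}(K)$. (The lower-order pieces are fine: in the weighted $L^2(K)$ pairing the combination $-\operatorname{div}(Bp)-\tfrac12 y\cdot Bp$ is exactly the adjoint of $B\cdot\nabla$, so it acts as an element of $H^{-1}(K)$ controlled by $|p|_{L^2(K)}$; your worry about the unbounded factor $y$ is therefore not the real obstacle.) With $Lp\in L^2(0,S;H^{-2}(K))$ the correct conclusion is $p_s\in L^2(0,S;H^{-2}(K))$, hence $p\in W(0,S,L^2(K),H^{-2}(K))$, and then one invokes the interpolation embedding $W(0,S,L^2(K),H^{-2}(K))\hookrightarrow C([0,S];H^{-1}(K))$, not the one you wrote.

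The paper reaches the same endpoint by a slightly different route: instead of reading off $p_s$ directly, it approximates $\xi$ by $\xi_m\in L^2(K)$, uses the continuity estimate $|p|_{L^2(0,S;L^2(K))}\le C|\xi|_{H^{-1}(K)}$ (an immediate consequence of the Riesz step) to get $p_m\to p$ in $L^2(0,S;L^2(K))$, and then shows by testing the equation against $\psi\in L^2(0,S;H^2(K))$ that $(p_m')$ is Cauchy in $L^2(0,S;H^{-2}(K))$; this gives convergence in $W(0,S,L^2(K),H^{-2}(K))\hookrightarrow C([0,S];H^{-1}(K))$. Your direct approach would work equally well once you replace $H^{-1}(K)$ by $H^{-2}(K)$ in the regularity of $p_s$ and quote the correct embedding.
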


\begin{proof} Let us consider the linear form $ F\colon L^2(0,S;L^2(K)) \to \re$ defined by
\begin{equation}\label{dF}
\disp F(\varphi) = \langle \xi,v(S)\rangle,
\end{equation}
for all $\varphi \in L^2(0,S;L^2(K))$, where  $v$ is the solution of (\ref{a1}), with $v_0=0$, corresponding to $\varphi$.

Since $v_0=0$, it is easy see that
\begin{equation}\label{a7}
\disp|v(S)|_{H^1(K)} \leq C|\varphi|_{L^2(0,S;L^2(K))}.
\end{equation}

Thus, $F$ is a continuous linear form in $L^2(0,S;L^2(K))$. By Riesz's representation theorem, there exists a unique $p\in L^2(0,S;L^2(K))$ such that
\begin{equation}\label{a8}
\disp F(\varphi)=\int_0^S \int_{\re^N} p\;\varphi\;K\,dyds, \;\; \mbox{ for all } \;\; \varphi \in L^2(0,S;L^2(K)).
\end{equation}

The uniqueness is consequence of the Du Bois Raymond Lemma. Moreover,
\begin{equation}\label{a9}
\disp |F|_{L^2(0,S;L^2(K))}=|p|_{L^2(0,S;L^2(K))}.
\end{equation}

Now, due to (\ref{dF}), (\ref{a7}) and (\ref{a9}), for some constant $C$, we have
\begin{equation}\label{a10}
\disp |p|_{L^2(0,S;L^2(K))}\leq C \;|\xi|_{H^{-1}(K)}.
\end{equation}

Since $\xi \in H^{-1}(K)$, there exists $\disp (\xi_m)_{m\in \mathbb{N}}$ with $\xi_m \in L^2(K)$ for all $m$, such that
\begin{equation}\label{a11}
\disp \xi_m \to \xi \quad\mbox{strongly in }\quad H^{-1}(K).
\end{equation}

Let $(p_m)$ be the sequence of solutions of (\ref{a6}) corresponding to $\xi_m$ for each $m$. Obviously, the function $p_n-p_m$ is the solution, by transposition, of (\ref{a6}) corresponding to $\xi_n-\xi_m$.

From (\ref{a10}) and (\ref{a11}) we have that $\disp (p_m)_{m\in \mathbb{N}}$ is a Cauchy sequence in $L^2(0,S;L^2(K))$ and
\begin{equation}\label{a12}
\disp p_m \to {p} \quad\mbox{ strongly in }\quad L^2(0,S,L^2(K)).
\end{equation}

On the other hand,
$$
\aligned
&\disp |\langle p_m'- p_n',\psi\rangle| \leq |\langle p_m-p_n, L\psi\rangle| + |\langle A(p_m-p_n),\psi\rangle|\,\\
&\disp + |\langle p_m-p_n, B\;.\;\nabla\psi\rangle| + \frac N2\,|\langle p_m-p_n,\psi\rangle|\,\\
&\disp \leq C|p_m-p_n|_{L^2(0,S,L^2(K))}\,|\psi|_{L^2(0,S,H^2(K))}\,,
\endaligned
$$
for all $\psi \in L^2(0,S;H^2(K))$, and this implies that $\disp (p_m')_{m\in \mathbb{N}}$ is a Cauchy sequence in $L^2(0,S;H^{-2}(K))$.

Therefore, we have
\begin{equation}\label{a13}
\disp p_m \to {p} \quad\mbox{ strongly in }\quad W(0, S, L^2(K), H^{-2}(K)),
\end{equation}
and this space is continuous embedded in $C([0,S],H^{-1}(K))$.

We obtain $\disp p_m \to p$ strongly in $ C([0,S],H^{-1}(K))$. In particular, $p \in C([0,S],H^{-1}(K))$. This completes the proof of Theorem \ref{ta2}.

\end{proof}

\end{document}